\newtheorem{thm}{Theorem}
\newtheorem{lemma}{Lemma}[section]
\newtheorem{pro}{Proposition}[section]
\newtheorem{cor}{Corollary}[section]
\theoremstyle{definition}
\newtheorem{remark}{Remark}[section]
\newcommand{\eps}{\varepsilonup}
\renewcommand{\alpha}{\alphaup}
\renewcommand{\beta}{\betaup}
\renewcommand{\gamma}{\gammaup}
\renewcommand{\lambda}{\lambdaup}
\renewcommand{\mu}{\muup}
\renewcommand{\xi}{\xiup}
\renewcommand{\psi}{\psiup}
\renewcommand{\phi}{\phiup}
\renewcommand{\eta}{\etaup}
\renewcommand{\theta}{\thetaup}
\numberwithin{equation}{section}
\begin{document}

\begin{frontmatter}[classification=text]


\author[TA]{Theresa Anderson}
\author[BC]{Brian Cook}
\author[KH]{Kevin Hughes}
\author[AK]{Angel Kumchev}

\begin{abstract}
We improve the range of $\ell^p(\mathbb Z^d)$-boundedness of the integral $k$-spherical maximal functions introduced by Magyar. The previously best known bounds for the full $k$-spherical maximal function require the dimension $d$ to grow at least cubically with the degree $k$. Combining ideas from our prior work with recent advances in the theory of Weyl sums by Bourgain, Demeter, and Guth and by Wooley, we reduce this cubic bound to a quadratic one. As an application, we improve upon bounds in the authors' previous work \cite{ACHK:WaringGoldbach} on the ergodic Waring--Goldbach problem, which is the analogous problem of $\ell^p(\mathbb Z^d)$-boundedness of the $k$-spherical maximal functions whose coordinates are restricted to prime values rather than integer values.
\end{abstract}
\end{frontmatter}

\section{Introduction}

Our interest lies in proving $\ell^p(\mathbb Z^d)$-bounds for the integral $k$-spherical maximal functions when $k \geq 3$. These maximal functions are defined in terms of their associated averages, which we now describe. Define a positive definite function $\mathfrak f$ on $\mathbb R^d$ by 
\[ \mathfrak f(\mathbf x) = \mathfrak f_{d,k}(\mathbf x) := |x_1|^k + \dots + |x_d|^k, \]
and note that when $\mathbf x \in \mathbb R_+^d$, $\mathfrak f(\mathbf x)$ is the diagonal form $x_1^k + \dots + x_d^k$. For $\lambda \in \mathbb N$, let $R(\lambda)$ denote the number of integral solutions to the equation
\begin{equation}\label{eq:numberofintegralsolutions}
\mathfrak f_{d,k}(\mathbf x) = \lambda. 
\end{equation}
When $R(\lambda)>0$, define the normalized arithmetic surface measure 
\[ \sigma_\lambda({\mathbf x}) := \frac{1}{R(\lambda)}{\mathbf 1}_{\{{\mathbf y} \in \mathbb Z^d: \mathfrak f(\mathbf y)=\lambda\}}({\mathbf x}). \] 
We are interested in averages given by convolution with these measures:
\[ \sigma_\lambda * f(\mathbf x) = \frac 1{R(\lambda)}\sum_{\mathfrak f (\mathbf y) = \lambda} f(\mathbf x - \mathbf y) \]
for functions $f: \mathbb Z^d \to \mathbb C$.

We know from the literature on Waring's problem that as $\lambda \to \infty$, one has the asymptotic 
\begin{equation}\label{eq:asymptoticnumberofsolutions}
R(\lambda) \sim \mathfrak S_{d,k}(\lambda) \lambda^{{d/k-1}},
\end{equation} 
where $\mathfrak S_{d,k}(\lambda)$ is a convergent product of local densities:
\[  \mathfrak S_{d,k}(\lambda) = \prod_{p \le \infty} \mu_p(\lambda).  \]
Here $\mu_p(\lambda)$ with $p < \infty$ is related to the solubility of \eqref{eq:numberofintegralsolutions} over the $p$-adic field $\mathbb Q_p$, and $\mu_\infty(\lambda)$ to solubility over the reals. It is known that when $d$ is sufficiently large in terms of~$k$, one has 
\begin{equation}\label{eq:singularseries} 
1 \lesssim \mathfrak S_{d,k}(\lambda) \lesssim 1.
\end{equation} 
In particular, these bounds hold for $d \ge 4k$ when $k \ge 4$ is a power of $2$, and for $d \ge \frac 32k$ otherwise (see Theorems~4.3 and 4.6 in Vaughan \cite{Vaughan}). 

Throughout the paper we use the notation $f(x) \lesssim g(x)$ or $g(x) \gtrsim f(x)$ to mean that there exists a constant $C > 0$ so that $|f(x)| \leq C|g(x)|$ for all sufficiently large $x \geq 0$. 
The implicit constant $C$ may depend on `inessential' or fixed parameters, but will be independent of `$x$';  below the implicit constants will often depend on the parameters $k,d$ and $p$. 
For instance, \eqref{eq:singularseries} means that there exists positive constants $C_1$ and $C_2$ depending on $d$ and $k$ so that $C_1 \leq \mathfrak S_{d,k}(\lambda) \leq C_2$. 

In view of \eqref{eq:asymptoticnumberofsolutions} and \eqref{eq:singularseries}, we may replace the convolution $\sigma_\lambda * f$ above by the average
\begin{equation}\label{integralavg} 
A_\lambda f(\mathbf x) := \lambda^{1-d/k} \sum_{\mathfrak f (\mathbf y) = \lambda} f(\mathbf x - \mathbf y).
\end{equation}
Our $k$-spherical maximal function is then defined, for $\mathbf x \in \mathbb Z^d$, as the pointwise supremum of all averages
\begin{equation}\label{def_A*}
A_* f(\mathbf x) := \sup_{\lambda \in \mathbb N} |A_\lambda f(\mathbf x)|.
\end{equation}
Variants of this maximal function were introduced by Magyar \cite{Magyar_dyadic} and studied later in~\cite{MSW, Magyar:ergodic, Ionescu, AS, Hughes_Vinogradov, Hughes_restricted}. In particular,  Magyar, Stein and Wainger \cite{MSW} considered the above maximal function when $k=2$ and $d \geq 5$ and proved that it is bounded on $\ell^p(\mathbb Z^d)$ when $p > \frac{d}{d-2}$. This result is sharp except at the endpoint, for which the restricted weak-type bound was proved later by Ionescu~\cite{Ionescu}. To the best of our knowledge, the sharpest results on the boundedness of $A_*$ for degrees $k \geq 3$ are those obtained by the third author in \cite{Hughes_restricted}. In the present paper, we give a further improvement.

For $k \ge 3$, define 
\begin{equation}\label{eq:def_d0}
d_0(k) := k^2 - \max_{2 \leq j \leq k-1} \left\{ \frac {k j - \min(2^j+2,j^2+j)}{k-j+1} \right\}.
\end{equation} 
Further, set $\tau_k = \max\left\{ 2^{1-k},  (k^2 - k)^{-1} \right\}$ and define the function $\delta_0(d,k)$ by 
\[ k \, \delta_0(d,k) := \begin{cases} (d-d_0)/(k^2+k - d_0) & \text{if } d_0(k) \le d \le k^2+k, \\ 1 + (d-k^2-k)\tau_k &\text{if } d > k^2+k. \end{cases} \]
Finally, define
\[ p_0(d,k):=\max \left\{ \frac{d}{d-k}, 1+\frac{1}{1+2\delta_0(d,k)} \right\}. \]
We remark that when $d > d_0(k)$, $p_0(d,k)$ always lies in the range $(1,2)$. Our main result is the following.

\begin{thm}\label{thm:newmax}
If $k \geq 3$, $d > d_0(k)$ and $p>p_0(d,k)$, then the maximal operator $A_*$, defined by \eqref{def_A*}, is bounded on \(\ell^p(\mathbb Z^d)\): that is,
\[ \| A_*f \|_{\ell^p(\mathbb Z^d)} \lesssim \| f \|_{\ell^p(\mathbb Z^d)} . \]
\end{thm}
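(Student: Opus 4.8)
\emph{Proof idea.} The plan is to run the Hardy--Littlewood circle method on the Fourier multiplier of $A_\lambda$ and transfer a continuous $k$-spherical maximal estimate back to $\mathbb Z^d$ in the manner of Magyar, Stein and Wainger \cite{MSW}, feeding in at each stage the sharp mean-value and minor-arc bounds for Weyl sums supplied by $\ell^2$-decoupling (Bourgain--Demeter--Guth) and efficient congruencing (Wooley). Writing $e(t):=e^{2\pi it}$ and noting that $\sup_{\lambda\le\Lambda_0}|A_\lambda f|$ is a fixed finite average, it suffices to treat large $\lambda$, where the circle method applies; the hypothesis $d>d_0(k)$ comfortably exceeds the thresholds behind \eqref{eq:singularseries}. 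On $\mathbb T^d$ the multiplier of $A_\lambda$ is
\[
 \widehat{A_\lambda}(\boldsymbol\xi)=\lambda^{1-d/k}\sum_{\mathfrak f(\mathbf y)=\lambda}e(-\mathbf y\cdot\boldsymbol\xi)=\lambda^{1-d/k}\int_{\mathbb T}\Bigl(\prod_{j=1}^{d}S_\lambda(\alpha,\xi_j)\Bigr)e(-\lambda\alpha)\,d\alpha,
\]
where $S_\lambda(\alpha,\xi):=\sum_{|n|\le\lambda^{1/k}}e(\alpha|n|^k-n\xi)$. A $\lambda$-dependent major/minor arc dissection of $\mathbb T$, with the usual replacement on the major arc at $a/q$ of each $S_\lambda(\alpha,\xi_j)$ by a normalized complete exponential sum modulo $q$ times a Fourier integral approximating the density of $\{\mathfrak f_{d,k}=1\}$, produces $A_\lambda=\sum_{q\ge1}B^q_\lambda+E_\lambda$. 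Here, up to controllable errors, $B^q_\lambda$ is the composition of a dilated continuous $k$-spherical average $\mathcal A_{\lambda^{1/k}}$ on $\mathbb R^d$ with an arithmetic convolution operator $\nu_q$ of period $q$, whose symbol is a $d$-fold product of (twisted) normalized complete exponential sums $q^{-1}\sum_{b\bmod q}e(ab^k/q)$ averaged over the residues $a$ with $(a,q)=1$, while $E_\lambda$ has Fourier transform supported off every fraction of denominator below the cutoff.

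For the error term, dominate the maximal function by a square function and use Plancherel: $\bigl\|\sup_\lambda|E_\lambda f|\bigr\|_{\ell^2}\le\bigl(\sum_\lambda\int_{\mathbb T^d}|\widehat{E_\lambda}(\boldsymbol\xi)|^2|\widehat f(\boldsymbol\xi)|^2\,d\boldsymbol\xi\bigr)^{1/2}$, so it suffices to show $\sup_{\boldsymbol\xi}\sum_\lambda|\widehat{E_\lambda}(\boldsymbol\xi)|^2\lesssim1$. Expanding the square and summing over $\lambda$ reduces this to moment and minor-arc estimates for $S_\lambda$: the Vinogradov-type bounds $\int_{\mathbb T}|S_\lambda(\alpha,0)|^{2s}\,d\alpha\lesssim\lambda^{(2s-k)/k+\eps}$ once $2s$ reaches the thresholds encoded by $\min(2^j+2,\,j^2+j)$ --- that is, Hua's inequality against the Vinogradov mean value, now known in optimal form --- together with the minor-arc estimate $|S_\lambda(\alpha,0)|\lesssim\lambda^{(1-\tau_k)/k+\eps}$ for badly approximable $\alpha$. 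Optimizing the choice of moment order $j$ against the number $d$ of variables available is precisely what yields the threshold $d_0(k)$, and it is the optimality of the modern Vinogradov estimates that brings this threshold down from cubic to quadratic in $k$. Interpolating the resulting $\ell^2$ bound against a crude $\ell^1\to\ell^{1,\infty}$ (or $\ell^\infty$) estimate for $\sup_\lambda|E_\lambda f|$ then disposes of the error term for all $p$ in the range in question.

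It remains to sum the main pieces $B^q_*:=\sup_\lambda|B^q_\lambda\,\cdot\,|$ over $q$. By the factorization above and the Magyar--Stein--Wainger transference lemma,
\[
 \|B^q_*f\|_{\ell^p(\mathbb Z^d)}\lesssim\|\nu_q\|_{\ell^p(\mathbb Z^d)\to\ell^p(\mathbb Z^d)}\cdot\|\mathcal A_*\|_{L^p(\mathbb R^d)\to L^p(\mathbb R^d)}\cdot\|f\|_{\ell^p(\mathbb Z^d)},
\]
where $\mathcal A_*=\sup_{t>0}|\mathcal A_t\,\cdot\,|$ is the continuous $k$-spherical maximal function attached to $\{\mathfrak f_{d,k}=1\}$, bounded on $L^p(\mathbb R^d)$ in particular for $p>d/(d-k)$; this is the first term of $p_0(d,k)$, and it is in any case necessary for $A_*$, as testing on a point mass and using $R(\lambda)\sim\lambda^{d/k-1}$ shows. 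The arithmetic factor is controlled by the classical estimate $|q^{-1}\sum_{b\bmod q}e(ab^k/q)|\lesssim q^{-1/k+\eps}$ for $(a,q)=1$, which yields $\|B^q_*\|_{\ell^2\to\ell^2}\lesssim q^{-d/k+\eps}$ (using orthogonality of the pieces over $a$ and the $L^2$-boundedness of $\mathcal A_*$, valid since $(d-1)/k>\tfrac12$) and a polynomial-in-$q$ bound $\|B^q_*\|_{\ell^p\to\ell^p}\lesssim q^{C(d,p)}$ near $p=1$; interpolating gives $\|B^q_*f\|_{\ell^p}\lesssim q^{-\delta(d,k,p)}\|f\|_{\ell^p}$ with $\delta(d,k,p)>0$ exactly when $p>1+(1+2\delta_0(d,k))^{-1}$ --- the available gain being the $2\delta_0(d,k)$ that the Weyl-sum estimates provide in the regime $d_0(k)\le d\le k^2+k$, continuing linearly with slope $\tau_k$ when $d>k^2+k$, where the minor-arc bound alone already closes the argument. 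Summing the geometric series in $q$ and combining with the error term yields $\|A_*f\|_{\ell^p}\lesssim\|f\|_{\ell^p}$ for $p>p_0(d,k)=\max\{d/(d-k),\,1+(1+2\delta_0(d,k))^{-1}\}$.

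The principal obstacle is the quantitative bookkeeping in the circle-method step: extracting the strongest exponents from decoupling and efficient congruencing, and then tracking with optimal constants how the Vinogradov threshold --- roughly $k(k-1)$ variables now, rather than the $\sim k^2$ or worse used in prior work --- propagates through the error-term moments, the per-$q$ $\ell^2$ estimates, and the interpolation defining $\delta_0(d,k)$; in particular one must handle cleanly the transition at $d=k^2+k$ between the comfortable regime, where the minor-arc Weyl bound by itself suffices, and the delicate regime $d_0(k)\le d\le k^2+k$, in which the mean-value estimates must be exploited to their full strength. A secondary technical point is verifying the continuous maximal bound for the non-smooth surface $\{\,|x_1|^k+\cdots+|x_d|^k=1\,\}$, whose control down to $p=d/(d-k)$ rests on the Fourier decay $|\widehat{d\sigma}(\boldsymbol\xi)|\lesssim(1+|\boldsymbol\xi|)^{-(d-1)/k}$ near the flat points together with a Littlewood--Paley argument away from them.
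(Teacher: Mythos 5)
Your overall architecture --- circle method on the multiplier, main term factored into Gauss-sum convolutions composed with a continuous $k$-spherical maximal operator and transferred via Magyar--Stein--Wainger, error term estimated on $\ell^2$ and interpolated with a trivial near-$\ell^1$ bound --- is the paper's architecture. But your treatment of the error term contains a gap that is fatal to the stated threshold $d_0(k)\sim k^2$. You bound $\sup_\lambda|E_\lambda f|$ by the square function and apply Plancherel, reducing matters to $\sup_{\boldsymbol\xi}\sum_{\lambda\sim\Lambda}|\widehat{E_\lambda}(\boldsymbol\xi)|^2\lesssim\Lambda^{-2\delta}$. Since Parseval in $\lambda$ gives exactly $\sum_{\lambda}\bigl|\int_{\mathfrak m}\mathcal F_N(\theta;\boldsymbol\xi)e(-\lambda\theta)\,d\theta\bigr|^2\le\int_{\mathfrak m}|\mathcal F_N(\theta;\boldsymbol\xi)|^2\,d\theta$, and the normalization contributes $\Lambda^{2-2d/k}$, this route requires a \emph{$2d$-th} moment of $S_N$ over the minor arcs saving more than $N^{2k}$ over the trivial bound. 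Even with Bourgain--Demeter--Guth and Wooley in optimal form, the saving available at the Vinogradov critical exponent $r=k^2+k$ is only $k+1$, and each further variable contributes only $\tau_k\approx k^{-2}$ via the Weyl/Bourgain minor-arc bound; saving $2k$ therefore forces $2d\gtrsim k^3$, which is precisely the cubic threshold of the earlier work you are meant to beat. The decisive new ingredient in the paper is not the number theory but Lemma~\ref{lemma:L2bound} (from \cite{ACHK:WaringGoldbach}): by Minkowski's inequality, $\|\sup_{\lambda\sim\Lambda}|T_\lambda|\|_{\ell^2\to\ell^2}\le\int_{\mathfrak B}\sup_{\boldsymbol\xi}|K(\theta;\boldsymbol\xi)|\,d\theta$. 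This discards the oscillation in $\lambda$ but needs only the \emph{$d$-th} moment $I_{d,k}(N)\lesssim N^{d-k-k\delta}$, and that is what makes $d>d_0(k)\approx k^2-k$ attainable. Without replacing your square-function step by this (or an equivalent first-power bound), your argument proves only a cubic-in-$k$ version of the theorem.

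Two further points. First, your mean-value inputs are stated for $S_\lambda(\alpha,0)$, whereas the multiplier involves $S_N(\theta,\xi_j)$ with a supremum over $\boldsymbol\xi\in\mathbb T^d$; removing that supremum is not free. The paper does it with Wooley's counting argument (Lemma~\ref{lemma:meanvaluereduction}), which converts $\sup_\xi$ into a factor $N^{l(l+1)/2}$ times an $(l+1)$-dimensional Vinogradov-type integral --- this is exactly where BDG and Wooley's minor-arc refinement enter, and the choice of $l$ is where the expression $\min(2^j+2,j^2+j)$ in $d_0(k)$ comes from. Second, you attribute the constraint $p>1+(1+2\delta_0)^{-1}$ to the $q$-summation of the main term; in fact the main term sums for all $p>d/(d-k)$ (the series $\sum_q q^{-\frac dk(2-2/p)+\eps}$ converges there), and the $\delta_0$-constraint arises from interpolating the minor-arc $\ell^2$ decay $\Lambda^{-\delta}$ against the trivial $\ell^1$ bound $\lesssim\Lambda|\mathfrak m|$ so that the dyadic sum over $\Lambda$ converges. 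A last small correction: the Fourier decay of the surface measure on $\{\mathfrak f_{d,k}=1\}$ used here is $(1+|\boldsymbol\xi|)^{1-d/k}$, not $(1+|\boldsymbol\xi|)^{-(d-1)/k}$, and it is fed into Rubio de Francia's theorem rather than a Littlewood--Paley argument.
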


Let $d_0^*(k) := 1 + \lfloor d_0(k) \rfloor$ denote the least dimension in which Theorem \ref{thm:newmax} establishes that $A_*$ is bounded on $\ell^2(\mathbb Z^d)$. We emphasize that for large $k$, we have $d_0^*(k) = k^2 - k + O(k^{1/2})$, whereas in previous results, such as the work of the third author \cite{Hughes_Vinogradov,Hughes_restricted}, one required $d > k^3-k^2$. 
While our results improve on these previous results by a factor of the degree $k$, the conjecture is that the maximal function is bounded on $\ell^2(\mathbb Z^d)$ for $d \gtrsim k$ (see \cite{Hughes_Vinogradov} for details), but such a result appears to be way beyond the reach of present methods. 

It is also instructive to compare $d_0^*(k)$ to the known bounds for the function $\tilde G(k)$ in the theory of Waring's problem (defined as the least value of $d$ for which the asymptotic formula \eqref{eq:asymptoticnumberofsolutions} holds). It transpires that the values of $d_0^*(k)$ match the best known upper bounds on $\tilde G(k)$ for all but a handful of small values of $k$, and even in those cases, we miss the best known bound on $\tilde G(k)$ only by a dimension or two. For an easier comparison, we list the numerical values of $d_0^*(k)$, $k \le 10$, their respective analogues in earlier work, and the bounds on $\tilde G(k)$ in Table \ref{table1}. 
\begin{table}[!htb]
\begin{center}
\begin{tabular}{ccccccccc}
\toprule
$k$                &  3 &  4 &  5 &   6 &   7 &   8 &  9  &  10 \\
\midrule
$d_0^*(k)$         & 10 & 16 & 24 &  35 &  47 &  62 &  79 &  97 \\
$d_1^*(k)$         & 13 & 33 & 81 & 181 & 295 & 449 & 649 & 901 \\
$\tilde G(k) \le $ &  8 & 15 & 23 &  34 &  47 &  61 &  78 &  97 \\
\bottomrule
\end{tabular}
\end{center}
\bigskip
\caption{Comparison between the values of $d_0^*(k)$ in Theorem \ref{thm:newmax}, the corresponding values $d_1^*(k)$ in the work of Hughes \cite{Hughes_Vinogradov}, and the known upper bounds on $\tilde G(k)$ in Bourgain \cite{Bourgain_Vinogradov} and Vaughan \cite{Vaughan_cubes}.}
\label{table1}
\end{table}

A key ingredient in the proof of Theorem \ref{thm:newmax} and its predecessors is an approximation formula generalizing \eqref{eq:asymptoticnumberofsolutions}. First introduced in \cite{MSW} when $k=2$, such approximations are obtained for the average's corresponding Fourier multiplier:
\begin{equation}\label{eq1.3}
\widehat{A_\lambda}({\boldsymbol\xiup}) = \lambda^{1-d/k} \sum_{\mathfrak f(\mathbf x)=\lambda}  e(\mathbf x \cdot \boldsymbol\xi),
\end{equation}
where $\boldsymbol\xi \in \mathbb T^d$ and $e(z) = e^{2\pi iz}$. 

We need to introduce some notation in order to state our approximation formula. Given an integer $q \ge 1$, we write $\mathbb Z_q = \mathbb Z/q\mathbb Z$ and $\mathbb Z_q^*$ for the group of units; we also write $e_q(x) = e(x/q)$. The $d$-dimensional Gauss sum of degree $k$ is defined as 
\[ G(q; a, \mathbf b) = q^{-d}\sum_{\mathbf x \in \mathbb Z_q^d} e_q ( a\mathfrak f(\mathbf x) + \mathbf b \cdot \mathbf x) \]
for $a \in \mathbb Z_q$ and $\mathbf b \in \mathbb Z_q^d$. If $\Sigma_\lambda$ denotes the surface in $\mathbb R_+^d$ defined by \eqref{eq:numberofintegralsolutions} and $dS_\lambda(\mathbf x)$ denotes the induced Lebesgue measure on $\Sigma_\lambda$, we define a continuous surface measure $d\sigma_\lambda(\mathbf x)$ on $\Sigma_\lambda$ by  
\[ 
d\sigma_{\lambda}(\mathbf x) 
:= \lambda^{1-d/k} \frac {dS_{\lambda}(\mathbf x)} {|\nabla\mathfrak f(\mathbf x)|}.
\]
We note that $d\sigma_\lambda$ is essentially a probability measure on $\Sigma_\lambda$ for all $\lambda$. We also fix a smooth bump function $\psi$, which is $1$ on \(\left[-\frac 18,\frac 18\right]^d\) and supported in \(\left[-\frac 14, \frac 14\right]^d\). Finally, we write $\widetilde{\mu}$ for the $\mathbb R^d$-Fourier transform of a measure $\mu$ on $\mathbb R^d$ and $\widehat{f}$ for the $\mathbb Z^d$-Fourier transform (which has domain $\mathbb T^d$) of a function $f$ on $\mathbb Z^d$. 

\begin{thm}[Approximation Formula] \label{thm:approximation_formula}
If $k \geq 3$, $d > d_0(k)$ and $\lambda \in \mathbb N$ is sufficiently large, then one has
\[ \widehat{A_\lambda}(\boldsymbol\xi) = \sum_{q=1}^{\infty} \sum_{a \in \mathbb Z_q^*} e_q( -\lambda a) \sum_{\boldsymbol w \in \{ \pm 1 \}^d} \sum_{\mathbf b \in \mathbb Z^d} G(q; a, \boldsymbol w\mathbf b) \psi(q\boldsymbol\xi-\mathbf b) \widetilde{d\sigma_{\lambda}}(\boldsymbol w(\boldsymbol\xi-q^{-1}\mathbf b)) + \widehat {E_\lambda}(\boldsymbol\xi), \]
where the error terms $\widehat{E_\lambda}$ are the multipliers of convolution operators satisfying the dyadic maximal inequality
\begin{equation}\label{eq:dyadic_error_bound}
\left\| \sup_{\lambda \in [\Lambda/2, \Lambda)} |E_\lambda| \right\|_{\ell^2(\mathbb Z^d) \to \ell^2(\mathbb Z^d)} \lesssim \Lambda^{-\delta}
\end{equation}
for each $\Lambda \ge 1$ and all sufficiently small $\delta > 0$.
\end{thm}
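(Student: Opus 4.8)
The plan is to establish the approximation formula via the Hardy--Littlewood circle method applied to the exponential sum in \eqref{eq1.3}, following the template of Magyar--Stein--Wainger \cite{MSW} and its $k$-spherical extensions, but pushing the major-arc analysis as far as the hypothesis $d > d_0(k)$ permits. First I would fix a dyadic parameter $\Lambda$ and work with $\lambda \in [\Lambda/2,\Lambda)$, writing $\widehat{A_\lambda}(\boldsymbol\xi) = \lambda^{1-d/k}\sum_{\mathbf x}\mathbf 1_{\{\mathfrak f(\mathbf x)=\lambda\}}e(\mathbf x\cdot\boldsymbol\xi)$ and representing the indicator by the usual integral $\int_{\mathbb T} \Bigl(\sum_{\mathbf x} e(\alpha\mathfrak f(\mathbf x) + \mathbf x\cdot\boldsymbol\xi)\Bigr) e(-\alpha\lambda)\,d\alpha$, where the inner sum is localized by a smooth cutoff to $|x_j| \lesssim \Lambda^{1/k}$ (the contribution of $\mathbf x$ with some $|x_j|$ much larger than $\Lambda^{1/k}$ being empty since $\mathfrak f(\mathbf x)=\lambda$). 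Introducing a Farey dissection at level $Q = \Lambda^{\eta}$ for a small $\eta>0$, I split $\mathbb T$ into major arcs $\mathfrak M(q,a)$ around rationals $a/q$ with $q \le Q$ and a minor-arc remainder $\mathfrak m$.

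On the major arcs, the standard change of variables $\alpha = a/q + \beta$ and summation over residues $\mathbf x \equiv \mathbf r \pmod q$ factors the exponential sum as (Gauss sum)$\times$(oscillatory integral), yielding, after summing over $q \le Q$, $a \in \mathbb Z_q^*$, and the Poisson-summation frequencies $\mathbf b$, precisely the main term displayed in Theorem~\ref{thm:approximation_formula} — the sum over $\boldsymbol w \in \{\pm1\}^d$ and the reflected surface measure $\widetilde{d\sigma_\lambda}(\boldsymbol w(\boldsymbol\xi - q^{-1}\mathbf b))$ arises because $\mathfrak f$ involves $|x_j|^k$ rather than $x_j^k$, so the archimedean integral is a product of $d$ one-dimensional oscillatory integrals each of which, via stationary phase on $[-1,1]$, contributes the Fourier transform of the arclength measure on the two branches $\pm t^{1/k}$. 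Completing the truncated $q$-sum and $\mathbf b$-sum to the infinite sums in the statement costs an acceptable error, because the Gauss sums satisfy $|G(q;a,\mathbf b)| \lesssim q^{-d/k+\eps}$ (a diagonal-form estimate deducible from the one-dimensional Weyl-type bound $|q^{-1}\sum_{x\bmod q} e_q(ax^k + bx)| \lesssim q^{-1/k+\eps}$ for $(a,q)=1$, raised to the $d$-th power) and $\widetilde{d\sigma_\lambda}$ decays like $|\boldsymbol\xi - q^{-1}\mathbf b|^{-(d-1)/k}$ or better, so the tail sums converge absolutely once $d/k > 2$, i.e. comfortably in our range; the $\psi(q\boldsymbol\xi-\mathbf b)$ factor ensures at most one nonzero term per $\boldsymbol\xi$ and keeps $\boldsymbol\xi$ inside the major arc.

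The heart of the matter, and the step I expect to be the main obstacle, is bounding the minor-arc contribution to $E_\lambda$ and, together with it, the difference between the true major-arc oscillatory integral and the idealized surface-measure transform. For the minor arcs one needs Weyl-type decay for $\sup_{\lambda}\bigl|\int_{\mathfrak m}(\cdots)e(-\alpha\lambda)d\alpha\bigr|$ as an $\ell^2 \to \ell^2$ multiplier bound; by Plancherel this reduces to an $L^\infty(\mathbb T^d)$ estimate for the minor-arc integral, which in turn is controlled by $\sup_{\alpha \in \mathfrak m}\|\,\cdot\,\|$-type bounds on the diagonal Weyl sum $\sum_{|x_j|\le \Lambda^{1/k}} e(\alpha\mathfrak f(\mathbf x)+\mathbf x\cdot\boldsymbol\xi) = \prod_{j=1}^d\bigl(\sum_{|x|\le\Lambda^{1/k}} e(\alpha|x|^k + \xi_j x)\bigr)$. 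Here is where the recent progress enters: the decoupling theorem of Bourgain--Demeter--Guth \cite{Bourgain_Vinogradov} (for $k\le 3$) and Wooley's efficient congruencing / nested efficient congruencing \cite{Vaughan_cubes} give essentially square-root cancellation in mean values of such one-dimensional Weyl sums down to the Main Conjecture in Vinogradov's mean value theorem, which is exactly what drives the threshold $d > d_0(k)$ with the $\min(2^j+2, j^2+j)$ appearing in \eqref{eq:def_d0}. I would extract from these a minor-arc bound of the shape $\lambda^{1-d/k}\cdot(\text{measure of }\mathfrak m)\cdot\Lambda^{d/k}\cdot\Lambda^{-\delta'}$ with $\delta' = \delta'(d,k) > 0$ whenever $d > d_0(k)$, and similarly a smooth-truncation error; dyadic summation over $\Lambda$ and a standard $\ell^2$ maximal-via-square-function argument (passing the supremum over $\lambda$ in a dyadic block to an $\ell^2$-norm at the cost of a $(\log\Lambda)$ factor absorbed into $\Lambda^{-\delta}$) then yields \eqref{eq:dyadic_error_bound}. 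The delicate bookkeeping is ensuring that the exponent gained on the minor arcs, after accounting for the normalization $\lambda^{1-d/k}$ and the width of the major arcs, is strictly positive precisely in the stated range — this is where the optimization over $2 \le j \le k-1$ in the definition of $d_0(k)$ comes from, and verifying it cleanly for all $k \ge 3$ is the technical crux.
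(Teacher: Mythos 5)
Your overall architecture is the paper's: decompose $\widehat{A_\lambda}$ via the circle method, extract the main term from the major arcs by the Gauss-sum/oscillatory-integral factorization and Poisson summation, and fold the minor arcs plus the major-arc approximation error into $\widehat{E_\lambda}$. (Indeed, in the paper Theorem~\ref{thm:approximation_formula} is precisely the sum of Proposition~\ref{lemma:minor_arc_approximation} and Proposition~\ref{lemma:majorarcs}.) The genuine gap is in the decisive step, the minor-arc $\ell^2$ bound in the range $d>d_0(k)\approx k^2-k$. The estimate you say you would extract, of the shape $\lambda^{1-d/k}\cdot|\mathfrak m|\cdot\Lambda^{d/k}\cdot\Lambda^{-\delta'}$, is exactly what the pointwise Weyl bound $\sup_{\mathfrak m\times\mathbb T}|S_N(\theta,\xi)|\lesssim N^{1-\tau_k+\varepsilon}$ delivers: it gives $N^{k-d}\cdot N^{d(1-\tau_k)}=N^{k-d\tau_k}$, which is only negative for $d\gtrsim k/\tau_k\sim k^3$ --- the old cubic threshold the theorem is supposed to beat. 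To reach $d_0(k)$ one must instead (i) bound the dyadic maximal operator loss-free by $N^{k-d}\int_{\mathfrak m}\sup_{\boldsymbol\xi}|\mathcal F_N(\theta;\boldsymbol\xi)|\,d\theta$ (Lemma~\ref{lemma:L2bound}: Minkowski in $\theta$ before taking the supremum over $\lambda$, then Plancherel for each fixed $\theta$), and then (ii) convert the supremum over $\xi$ into an average over auxiliary lower-degree frequencies, $\int_{\mathfrak m}\sup_\xi|S_N(\theta,\xi)|^{2s}d\theta\lesssim N^{l(l+1)/2}\int_{\mathfrak m}\int_{\mathbb T^l}|\sum_n e(\theta n^k+\xi_l n^l+\cdots+\xi_1 n)|^{2s}\,d\boldsymbol\xi\,d\theta$ (Lemma~\ref{lemma:meanvaluereduction}, the Wooley-type maneuver), so that the Bourgain--Demeter--Guth/Wooley and Br\"udern--Robert mean value theorems become applicable; an interpolation between the resulting bounds at $r=r_0$ and $r=k(k+1)$ then produces the piecewise-linear $\delta_0(d,k)$ and the optimization over $2\le j\le k-1$ in \eqref{eq:def_d0}. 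Your proposal never performs step (ii), and without it the mean value theorems you invoke cannot be brought to bear on the quantity $\sup_{\boldsymbol\xi}\prod_j S_N(\theta,\xi_j)$, because the $d$ factors share the same $\theta$ but carry a supremum, not an integral, in the $\xi_j$.

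Two secondary problems. First, your ``maximal-via-square-function argument with a $(\log\Lambda)$ loss'' does not work as stated: a square function over the full dyadic block $\lambda\in[\Lambda/2,\Lambda)$ has $\sim\Lambda$ terms and costs a factor $\Lambda^{1/2}$, which destroys the saving $\Lambda^{-\delta}$ (here $\delta<1/(2k)$); this is exactly why Lemma~\ref{lemma:L2bound} is set up to exchange $\sup_\lambda$ and $\int_{\mathfrak B}$ with no loss at all. Second, on the major arcs the bound \eqref{eq:dyadic_error_bound} requires a \emph{maximal} estimate for $\sup_\lambda|A_\lambda^{\mathfrak M}-M_\lambda|$, which the paper obtains through three successive approximations ($A\to B\to C\to D$, each controlled again by Lemma~\ref{lemma:L2bound} together with \eqref{eq:majorapprox}--\eqref{eq:basicbound2}); your sketch treats this as routine bookkeeping, which is acceptable in outline, but note that the condition $d>2k$ (not just $d/k>2$ for tail convergence) enters there. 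The citation attributions (decoupling to \cite{Bourgain_Vinogradov}, efficient congruencing to \cite{Vaughan_cubes}) are also scrambled, though that is cosmetic.
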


Our Approximation Formula takes the same shape as those in \cite{Hughes_Vinogradov} and \cite{Hughes_restricted}, but with an improved error term that relies on two recent developments: 
\begin{itemize}
\item
the underlying analytic methods were improved in the authors' previous work \cite{ACHK:WaringGoldbach},
\item
and the recent resolution of the main conjecture about Vinogradov's mean value integral \cite{Wooley_cubic, BourgainDemeterGuth} and related refinements of classical mean value estimates \cite{Wooley_asymptotic, Bourgain_Vinogradov}.
\end{itemize}
The most dramatic improvement follows from our improved analytic methods originating in \cite{ACHK:WaringGoldbach} where we improve the range of $\ell^2(\mathbb{Z}^d)$ by a factor of the degree $k$. 
In \cite{Hughes_restricted}, this sort of improvement - which also used the recent resolution of the Vinogradov mean values theorems \cite{Wooley_asymptotic, Bourgain_Vinogradov} - was limited to maximal functions over sufficiently sparse sequences. 
Here, our bounds supersede those for integral $k$-spherical maximal functions over sparse sequences in \cite{Hughes_restricted} because our treatment of the minor arcs in the error term is more efficient. 
The reader may compare Lemmas~\ref{lemma:meanvaluereduction} and \ref{lemma:L2bound} below to Lemmas~2.1 and 2.2 of \cite{Hughes_restricted} to determine the efficacy of our method here. 
Consequently, \cite{Wooley_asymptotic} and \cite{Bourgain_Vinogradov} allow us to further improve slightly upon a more direct application of the Vinogradov mean value theorems from \cite{Wooley_cubic} and \cite{BourgainDemeterGuth}. 
One minor drawback is that in our method the $\epsilon$-losses in \cite{Wooley_cubic,BourgainDemeterGuth} do not allow us to deduce endpoint bounds. 

As an application, we deduce that the maximal function of the "ergodic Waring--Goldbach problem" introduced in our recent work \cite{ACHK:WaringGoldbach} is bounded on the same range of spaces as above. That maximal function is associated with averages where, instead of sampling over integer points, we sample over points where all coordinates are prime. To be precise, let $R^*(\lambda)$ denote the number of prime solutions to the equation \eqref{eq:numberofintegralsolutions} weighted by logarithmic factors: that is, 
\[ R^*(\lambda) := \sum_{ \mathfrak f(\mathbf x) = \lambda} {\mathbf 1}_{\mathbb P^d}(\mathbf x)(\log x_1) \cdots (\log x_d), \]
where $\mathbf 1_{\mathbb P^d}$ is the indicator function of vectors $\mathbf x \in \mathbb{Z}^d$ with all coordinates prime. When $R^*(\lambda)>0$, define the normalized arithmetic surface measure 
\[ \omega_\lambda({\mathbf x}) := \frac{1}{R^*(\lambda)}{\bf 1}_{\{{\mathbf y} \in \mathbb P^d: \mathfrak f(\mathbf y)=\lambda\}}({\mathbf x}) (\log x_1) \cdots (\log x_d) \] 
and the respective convolution operators  
\begin{equation}\label{primeavg}
W_\lambda f := \omega_\lambda * f.
\end{equation}
Similarly to \eqref{eq:asymptoticnumberofsolutions}, we know that as $\lambda \to \infty$, one has the asymptotic 
\[ R^*(\lambda) = \big(\mathfrak S_{d,k}^*(\lambda) + o(1)\big) \lambda^{{d/k-1}}, \]
where $\mathfrak S_{d,k}^*(\lambda)$ is a product of local densities similar to $\mathfrak S_{d,k}(\lambda)$ above. Moreover, when $d > 3k$ and $\lambda$ is restricted to a particular arithmetic progression $\Gamma_{d,k}$, we have $1 \lesssim \mathfrak S_{d,k}^*(\lambda) \lesssim 1$, and the above estimate turns into a true asymptotic formula for $R^*(\lambda)$ (see the introduction and references in \cite{ACHK:WaringGoldbach}). By Theorem~6 of~\cite{ACHK:WaringGoldbach} and Theorem~\ref{thm:newmax} above, we immediately obtain the following result. Here, as in \cite{ACHK:WaringGoldbach}, $d_1(3) = 13$ and $d_1(k) = k^2+k+3$.

\begin{thm}
If $k \geq 3$, $d \ge d_1(k)$ and $p > p_0(d,k)$, then the maximal function defined by
\[ W_* f(\mathbf x) := \sup_{\lambda \in \Gamma_{d,k}} |W_\lambda f(\mathbf x)|, \]
is bounded on $\ell^p(\mathbb Z^d)$.
\end{thm}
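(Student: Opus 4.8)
The plan is to derive the statement directly from Theorem~\ref{thm:newmax} by invoking the comparison principle established as Theorem~6 of \cite{ACHK:WaringGoldbach}. That result reduces the $\ell^p$-boundedness of $W_*$ --- with the supremum taken over $\lambda$ in the fixed arithmetic progression $\Gamma_{d,k}$ and in the regime $d \ge d_1(k)$ --- to that of $A_*$, by furnishing an approximation formula for the prime multiplier $\widehat{W_\lambda}$ of exactly the same shape as Theorem~\ref{thm:approximation_formula}. Concretely, $\widehat{W_\lambda}$ is expanded as a main term summing, over $q \ge 1$, $a \in \mathbb Z_q^*$, sign vectors $\boldsymbol w \in \{\pm 1\}^d$ and $\mathbf b \in \mathbb Z^d$, the products
\[ G^*(q;a,\boldsymbol w\mathbf b)\,\psi(q\boldsymbol\xi-\mathbf b)\,\widetilde{d\sigma_\lambda}\bigl(\boldsymbol w(\boldsymbol\xi-q^{-1}\mathbf b)\bigr), \]
where $G^*$ denotes the prime (Ramanujan-type) Gauss sum, plus an error multiplier $\widehat{E_\lambda^*}$ obeying the dyadic bound $\bigl\|\sup_{\lambda\in[\Lambda/2,\Lambda)\cap\Gamma_{d,k}}|E_\lambda^*|\bigr\|_{\ell^2\to\ell^2}\lesssim\Lambda^{-\delta}$, the exact analogue of \eqref{eq:dyadic_error_bound}. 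The hypothesis $d \ge d_1(k)$ enters only to secure the arithmetic of the prime problem: it forces $d > 3k$, hence $1 \lesssim \mathfrak S_{d,k}^*(\lambda) \lesssim 1$ for $\lambda \in \Gamma_{d,k}$ and the validity of the prime circle-method asymptotic for $R^*(\lambda)$, so that $W_\lambda$ may be replaced by its unnormalized analogue just as $\sigma_\lambda*f$ was replaced by \eqref{integralavg}.

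The first step is to note the elementary inequality $d_1(k) > d_0(k)$. For $k \ge 4$ one has $d_1(k) = k^2+k+3 > k^2 \ge d_0(k)$, where $d_0(k) \le k^2$ because the maximum in \eqref{eq:def_d0} is nonnegative (take $j=2$); for $k = 3$ one checks directly that $d_1(3) = 13 > 9 = d_0(3)$. Hence $d \ge d_1(k)$ forces $d > d_0(k)$, and Theorem~\ref{thm:newmax} --- together with the proof of Theorem~\ref{thm:approximation_formula} on which it rests --- is available in dimension $d$ for every $p > p_0(d,k)$. The second step is the observation that the passage from the approximation formula to the $\ell^p$-bound for $A_*$ exploits only three structural features: the standard size bound $|G(q;a,\mathbf b)| \lesssim q^{-\rho}$ for the Gauss sums (with some $\rho = \rho(d,k) > 0$, valid since $d$ is large); the decay of $\widetilde{d\sigma_\lambda}$, which controls the low-frequency part of the main term through comparison with the continuous $k$-spherical maximal operator on $\mathbb R^d$ (bounded on $L^p$ for $p > d/(d-k)$); and the dyadic $\ell^2$ error bound \eqref{eq:dyadic_error_bound}. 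In the prime setting, $\widetilde{d\sigma_\lambda}$ is literally the same object, the error term obeys the same dyadic $\ell^2$ bound by Theorem~6 of \cite{ACHK:WaringGoldbach}, and $G^*(q;a,\mathbf b)$ satisfies the same size bound --- in fact no worse, the coprimality constraint only improving cancellation. Consequently the whole argument for Theorem~\ref{thm:newmax} --- the high--low splitting in $q$, the estimate for each $q$-block of the main term, and the interpolation of the $\ell^2$ bound against the trivial $\ell^\infty$ one --- carries over with no change beyond restricting the supremum to $\lambda \in \Gamma_{d,k}$, yielding $\|W_* f\|_{\ell^p(\mathbb Z^d)} \lesssim \|f\|_{\ell^p(\mathbb Z^d)}$ for $p > p_0(d,k)$.

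I expect no genuinely new obstacle: the substantive analysis lives in \cite{ACHK:WaringGoldbach} (for the prime approximation formula and its error estimate) and in Theorem~\ref{thm:newmax} (for the integer maximal function). The only point that repays a moment's attention is the effect of the restriction $\lambda \in \Gamma_{d,k}$ on the dyadic machinery: one should check that $\Gamma_{d,k} \cap [\Lambda/2,\Lambda)$ is nonempty for all large $\Lambda$, so that the decomposition over scales proceeds exactly as in the integer case, and that thinning the various $\ell^2$ sums over $\lambda$ to the progression costs at most an $O(1)$ factor. Both are immediate since $\Gamma_{d,k}$ is a fixed arithmetic progression, and shrinking the index set of a supremum can only help. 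With these remarks in place, the theorem follows by combining Theorem~6 of \cite{ACHK:WaringGoldbach} with Theorem~\ref{thm:newmax}.
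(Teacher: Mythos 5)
Your proposal is correct and follows the same route as the paper, which likewise deduces the result immediately from Theorem~6 of \cite{ACHK:WaringGoldbach} combined with Theorem~\ref{thm:newmax}; your extra verification that $d \ge d_1(k)$ forces $d > d_0(k)$ (so that Theorem~\ref{thm:newmax} is actually applicable) is a worthwhile detail the paper leaves implicit. Nothing further is needed.
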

\noindent 
As another application one may give analogous improvements of the ergodic theorems obtained in~\cite{Hughes_Vinogradov}, but we do not consider this here.

To establish our theorems, we follow the paradigm in \cite{MSW} and strengthen the connection to Waring's problem as initiated in \cite{Hughes_restricted} by using a lemma from \cite{ACHK:WaringGoldbach}; we then use recent work on Waring's problem to obtain improved bounds. 
We remark that \cite{Stein_Wainger_discrete_fractional_integration_revisited} and \cite{Pierce_mean_values} previously connected mean values (Hypothesis $K^*$ and Vinogradov's mean value theorems respectively) to discrete fractional integration. 
In Section~\ref{section:previous}, we outline the proofs of Theorems \ref{thm:newmax} and \ref{thm:approximation_formula}; we recall some results from \cite{Hughes_Vinogradov,Magyar_dyadic} and state the key propositions required in the proofs. The remaining sections establish the propositions. 
In Section \ref{section:minorarcs} we deal with the minor arcs; particularly, in Section~\ref{section:VMVT}, we use the recent work of Bourgain, Demeter and Guth \cite{BourgainDemeterGuth} on Vinogradov's mean value theorem and a method of Wooley \cite{Wooley_asymptotic} for estimation of mean values over minor arcs. 
In Section~\ref{section:majorarcs}, we establish the relevant major arc approximations. Finally, in Section \ref{sec:5}, we establish the boundedness of the maximal function associated with the main term in the Approximation Formula.

\section{Outline of the Proof}\label{section:previous}

Since $A_*$ is trivially bounded on $\ell^{\infty}(\mathbb Z^d)$, we may assume through the rest of the paper that $p \le 2$. 
Fix $\Lambda \in \mathbb N$ and consider the dyadic maximal operator
\[ A_{\Lambda} f := \sup_{\lambda \in [\Lambda/2,\Lambda)} |A_{\lambda}f|. \]
When $\lambda \le \Lambda$, we have
\[ A_\lambda f = \lambda^{1-d/k} \int_{\mathbb T} (h_{\Lambda}(\theta) * f)e(-\lambda\theta) \, d\theta, \]
where 
\[ h_{\Lambda}(\theta) = h_{\Lambda}(\theta; \mathbf x) := e(\theta \mathfrak f(\mathbf x))\mathbf 1_{[-N,N]^d}(\mathbf x) \quad \text{with} \quad N = \Lambda^{1/k}. \] 
This representation allows us to decompose $A_\lambda$ into operators of the form
\begin{equation}\label{eq:integral} 
A_\lambda^{\mathfrak B} f = \lambda^{1-d/k} \int_{\mathfrak B} (h_{\Lambda}(\theta) * f)e(-\lambda\theta) \, d\theta, 
\end{equation}
for various measurable sets $\mathfrak B \subseteq \mathbb T$.

Our decomposition of $A_{\lambda}$, is inspired by the Hardy--Littlewood circle method. When $q \in \mathbb N$ and $0 \le a \le q$, we define the major arc ${\mathfrak M}(a/q)$ by
\[ {\mathfrak M}(a/q) = \left[ \frac aq - \frac 1{4kqN^{k-1}},\frac aq + \frac 1{4kqN^{k-1}} \right]. \]
We then decompose $\mathbb T$ into sets of \emph{major} and \emph{minor arcs}, given by
\[ {\mathfrak M} = {\mathfrak M}(\Lambda) = \bigcup_{q \leq N} \bigcup_{a \in \mathbb Z_q^*} {\mathfrak M}(a/q) \quad \text{ and } \quad {\mathfrak m}(\Lambda) = \mathbb T \setminus {\mathfrak M}(\Lambda). \]
Since the major arcs ${\mathfrak M}(a/q)$ are disjoint, this yields a respective decomposition of $A_\lambda$ as 
\[ A_\lambda = \sum_{q \le N} \sum_{a \in \mathbb Z_q^*} A_\lambda^{a/q} + A_\lambda^{{\mathfrak m}}, \]
where $A_\lambda^{a/q} := A_\lambda^{{\mathfrak M}(a/q)}$. 

We will use the notations $A_*^{\mathfrak B}$ and $A_{\Lambda}^{\mathfrak B}$ to denote the respective maximal functions obtained from the operators $A_\lambda^{\mathfrak{B}}$. For example, from \eqref{eq:integral} and the trivial bound for the trigonometric polynomial $h_{\Lambda}(\theta) * f$, we obtain the trivial $\ell^1$-bound
\begin{equation}\label{eq:l1}
\left\| A_{\Lambda}^{\mathfrak B} \right\|_{\ell^1(\mathbb Z^d) \to \ell^1(\mathbb Z^d)} \lesssim \Lambda |\mathfrak B|.
\end{equation}
In Section~\ref{section:minorarcs}, we analyze the minor arc term and prove the following result.

\begin{pro}\label{lemma:minor_arc_approximation}
If $k \geq 3$, $d > d_0(k)$, and $\Lambda \gtrsim 1$, then
\begin{equation}\label{eq:minor_arc_approximation}
\left\| A_{\Lambda}^{{\mathfrak m}} \right\|_{\ell^2(\mathbb Z^d) \to \ell^2(\mathbb Z^d)} \lesssim \Lambda^{-\delta}
\end{equation}
for all $\delta \in (0,\delta_0(d,k))$.
\end{pro}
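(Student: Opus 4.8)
The plan is to reduce Proposition~\ref{lemma:minor_arc_approximation} to an averaged minor‑arc bound for a Weyl sum, and then to settle that bound by the circle method. Since $d>d_0(k)>k$, the exponent $1-d/k$ is negative, so $\lambda^{1-d/k}\asymp\Lambda^{1-d/k}$ uniformly for $\lambda\in[\Lambda/2,\Lambda)$. Pulling absolute values through the integral in~\eqref{eq:integral} with $\mathfrak B={\mathfrak m}$ and then applying Minkowski's integral inequality in the $\mathbf x$‑variable gives, with $N=\Lambda^{1/k}$,
\[
\bigl\|A_{\Lambda}^{{\mathfrak m}}f\bigr\|_{\ell^2}\;\lesssim\;N^{k-d}\int_{{\mathfrak m}}\bigl\|h_{\Lambda}(\theta)*f\bigr\|_{\ell^2}\,d\theta\;\le\;N^{k-d}\Bigl(\int_{{\mathfrak m}}\bigl\|\widehat{h_{\Lambda}(\theta)}\bigr\|_{L^{\infty}(\mathbb T^d)}\,d\theta\Bigr)\|f\|_{\ell^2}.
\]
Because $\mathfrak f$ is a diagonal form, $\widehat{h_{\Lambda}(\theta)}(\boldsymbol\xi)=\prod_{i=1}^{d}f_{N}(\theta;\xi_i)$ with $f_{N}(\theta;\alpha)=\sum_{|x|\le N}e(\theta|x|^{k}+\alpha x)$ a univariate Weyl sum of degree $k$, so $\|\widehat{h_{\Lambda}(\theta)}\|_{L^{\infty}(\mathbb T^d)}=\Phi(\theta)^{d}$, where $\Phi(\theta):=\sup_{\alpha\in\mathbb T}|f_{N}(\theta;\alpha)|$. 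It therefore suffices to prove the averaged estimate
\[
\int_{{\mathfrak m}}\Phi(\theta)^{d}\,d\theta\;\lesssim\;N^{\,d-k-k\delta+\epsilon}\qquad(\delta\in(0,\delta_{0}(d,k))).
\]
The point of working with this $\ell^{1}$‑in‑$\theta$ quantity, rather than passing (via Bessel's inequality) to the $\ell^{2}$‑mean value $\int_{{\mathfrak m}}|\widehat{h_{\Lambda}(\theta)}(\boldsymbol\xi)|^{2}\,d\theta$ as in the error‑term estimates of earlier work, is precisely what makes an order‑$k$ improvement in the admissible range of $d$ possible.

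To bound $\int_{{\mathfrak m}}\Phi(\theta)^{d}\,d\theta$, I would decompose ${\mathfrak m}$ dyadically according to the size of the denominator of the Dirichlet approximation: for $N<Q\le N^{k-1}$, let ${\mathfrak m}_{Q}\subseteq{\mathfrak m}$ be the set of $\theta$ lying within $q^{-1}N^{1-k}$ of some reduced fraction $a/q$ with $Q\le q<2Q$ (the residual boundary annuli of the major arcs being absorbed in the same way). A standard count gives $|{\mathfrak m}_{Q}|\lesssim QN^{1-k}$, while on ${\mathfrak m}_{Q}$ the minor‑arc Weyl bound — Weyl's classical inequality for small $k$, and the Vinogradov‑type bound following from the resolution of Vinogradov's mean value theorem by Wooley and by Bourgain, Demeter and Guth~\cite{Wooley_cubic,BourgainDemeterGuth} for large $k$ — yields $\Phi(\theta)\lesssim N^{1+\epsilon}Q^{-\tau_{k}}$, uniformly in the linear twist $\alpha$ since that twist only affects lower‑order coefficients. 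Inserting these bounds and summing the resulting geometric‑type series in $Q$ (dominated by its first term $Q\asymp N$ once $d>d_0(k)$) already gives the averaged estimate for a range of $\delta$ that is quadratic in the sense that it needs only $d\gtrsim k^{2}$, superseding the cubic range $d>k^{3}-k^{2}$ of~\cite{Hughes_Vinogradov,Hughes_restricted}. To reach the sharp exponent $\delta_{0}(d,k)$ and the sharp threshold $d_{0}(k)$ of~\eqref{eq:def_d0} — whose values essentially match the best bounds known for $\tilde G(k)$ — one refines this by applying, on each ${\mathfrak m}_{Q}$, the circle method at an optimally chosen auxiliary degree $j$: after $k-j$ Weyl‑differencing steps the degree‑$k$ sum becomes a degree‑$j$ exponential sum, which one controls either by Hua's inequality (requiring $2^{j}+2$ variables) or by the degree‑$j$ Vinogradov mean value theorem (requiring $j^{2}+j$ variables); the $(k-j+1)$‑fold differencing accounts for the denominator $k-j+1$ in~\eqref{eq:def_d0}, and optimising over $2\le j\le k-1$ produces exactly $d_{0}(k)$, with the two cases of $\delta_{0}(d,k)$ corresponding to $d_{0}(k)<d\le k^{2}+k$ (where the available saving grows linearly in $d$) and $d>k^{2}+k$ (where every further dimension contributes an additional Weyl saving $\tau_{k}$). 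The small extra gain from the refined mean value estimates of~\cite{Wooley_asymptotic,Bourgain_Vinogradov} over a direct appeal to~\cite{Wooley_cubic,BourgainDemeterGuth} also enters at this stage.

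The step I expect to be the main obstacle is exactly this last, quantitative balancing — the content of the two lemmas described in the introduction as the analogues of Lemmas~2.1 and~2.2 of~\cite{Hughes_restricted}, carried out in Section~\ref{section:minorarcs}, with the degree‑$j$ mean value inputs assembled in Section~\ref{section:VMVT}. One must trade off three competing quantities simultaneously — the measure of ${\mathfrak m}_{Q}$, the pointwise Weyl saving on ${\mathfrak m}_{Q}$, and the number of variables the auxiliary degree‑$j$ mean value consumes — uniformly in $Q$ and in the linear twists $\xi_{i}$, and then check that the optimum over $j$ is precisely the expression in~\eqref{eq:def_d0}; a careless choice (using only $\tau_{k}$, or the degree‑$k$ Vinogradov mean value directly) loses the constant and recovers at best the older $d>k^{3}-k^{2}$ range. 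Finally, the unavoidable $\epsilon$‑losses in~\cite{Wooley_cubic,BourgainDemeterGuth} are what prevent reaching $\delta=\delta_{0}(d,k)$, which is why the conclusion is stated only for $\delta$ strictly below $\delta_{0}(d,k)$.
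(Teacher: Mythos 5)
Your opening reduction is sound and coincides with the paper's: the Minkowski--Plancherel step is exactly Lemma~\ref{lemma:L2bound} applied with $K=\mathcal F_N$ and $\mathfrak B={\mathfrak m}$, and you correctly identify the target estimate $\int_{{\mathfrak m}}\sup_{\xi}|S_N(\theta,\xi)|^{d}\,d\theta\lesssim N^{d-k-k\delta_0(d,k)+\eps}$, i.e.\ \eqref{eq:needed}. The gap is in how you propose to prove that estimate. Your primary argument --- the measure bound $|{\mathfrak m}_Q|\lesssim QN^{1-k}$ times the pointwise Weyl/Bourgain bound $\Phi(\theta)\lesssim N^{1+\eps}Q^{-\tau_k}$ --- gives $\int_{{\mathfrak m}_Q}\Phi^{d}\lesssim QN^{1-k}\cdot N^{d+\eps}Q^{-d\tau_k}$, and summing over $Q\gtrsim N$ yields at best $N^{d-k+2-d\tau_k+\eps}$. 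For this to be $\lesssim N^{d-k-\delta}$ one needs $d\tau_k\ge 2+\delta$, i.e.\ roughly $d\ge 2(k^2-k)$; for $d$ near $d_0(k)\approx k^2-k$ this argument produces no saving at all (in fact a loss of a power of $N$). So the crude step cannot reach the claimed range, and the whole burden falls on your ``refinement.''

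That refinement, as described, is not a proof and mis-identifies the mechanism behind \eqref{eq:def_d0}. You attribute the denominator $k-j+1$ to ``$(k-j+1)$-fold Weyl differencing'' reducing the degree-$k$ sum to degree $j$; but Weyl differencing squares the sum at each step, which is precisely the exponential loss that the modern mean value technology is designed to avoid, and it cannot produce these exponents. In the paper the auxiliary degree $j$ enters through mean values, not differencing: Lemma~\ref{lemma:meanvaluereduction} (a Wooley-style orthogonality/counting argument) trades the supremum over $\xi$ for $l$ auxiliary integrations at a cost of $N^{l(l+1)/2}$; the resulting mean values are then bounded at $r_0=\min(j^2+j,\,2^j+2)$ moments (Corollary~\ref{cor:meanT2}, via Br\"udern--Robert and Bourgain's Hua-type estimate) and at $k^2+k$ moments (Lemma~\ref{lemma:Wooleyminorarc}, Wooley's minor-arc refinement of the Vinogradov mean value, which supplies an extra saving your sketch has no substitute for); and the denominator $k-j+1$ is the slope of the H\"older interpolation between these two endpoints (Lemma~\ref{proposition:interpolation_savings}, where $\delta(r)=1-t(k-l+1)$). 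Without the sup-removal lemma and this interpolation, your argument stalls at $d\gtrsim 2k^2$ rather than $d>d_0(k)$. (Your use of the pointwise Weyl saving to handle additional dimensions is correct, but in the paper it is only needed --- and only works --- for $d>k^2+k$, after the $r=k^2+k$ case is already in hand.)
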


For $1 < p \le 2$, interpolation between \eqref{eq:l1} and \eqref{eq:minor_arc_approximation} yields 
\[ \left\| A_{\Lambda}^{\mathfrak m} \right\|_{\ell^p(\mathbb Z^d) \to \ell^p(\mathbb Z^d)} \lesssim \Lambda^{-\alpha_p} \]
with $\alpha_p = 2(1+\delta)(1-1/p) - 1$. When $p_0(d,k) < p \le 2$, we have $\alpha_p > 0$, and hence,
\begin{equation}\label{eq:main_minor_arc}
\left\| A_*^{\mathfrak m} \right\|_{\ell^p(\mathbb Z^d) \to \ell^p(\mathbb Z^d)} \le \sum_{\Lambda = 2^j \gtrsim 1} \left\| A_{\Lambda}^{\mathfrak m} \right\|_{\ell^p(\mathbb Z^d) \to \ell^p(\mathbb Z^d)} \lesssim \sum_{\Lambda = 2^j \gtrsim 1} \Lambda^{-\alpha_p} \lesssim 1.
\end{equation}  

The estimation of the major arc terms is more challenging, because an analogue of~\eqref{eq:minor_arc_approximation} does not hold for $A_{\Lambda}^{{\mathfrak M}}$. Still, it is possible to establish a slightly weaker version of Proposition~\ref{lemma:minor_arc_approximation}. The following result was first established by Magyar~\cite{Magyar_dyadic}, for $d \ge 2^k$, and then extended by the third author \cite{Hughes_Vinogradov} in the present form.

\begin{pro}\label{lemma:major_arc_bound}
If $k \geq 3$, $d > 2k$, $\frac{d}{d-k} < p \le 2$, and $\Lambda \gtrsim 1$, one has 
\begin{equation}\label{eq:major_arc_bound}
\left\| A_{\Lambda}^{{\mathfrak M}} \right\|_{\ell^p(\mathbb Z^d) \to \ell^p(\mathbb Z^d)} \lesssim 1.
\end{equation}
\end{pro}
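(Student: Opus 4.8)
The plan is to run the Hardy--Littlewood circle method on the major arcs exactly as in Magyar \cite{Magyar_dyadic}, and then invoke the transference principle of Magyar, Stein and Wainger \cite{MSW} to reduce the $\ell^p(\mathbb Z^d)$ estimate to a product of a continuous $L^p(\mathbb R^d)$ bound and a purely arithmetic $\ell^p$ bound. First I would split the major arcs dyadically: write $\mathfrak M = \bigcup_Q \mathfrak M_Q$ where $Q$ runs over powers of two with $Q \le N$ and $\mathfrak M_Q = \bigcup_{Q \le q < 2Q}\bigcup_{a \in \mathbb Z_q^*} \mathfrak M(a/q)$, so that $A_\lambda^{\mathfrak M} f = \sum_Q A_\lambda^{\mathfrak M_Q} f$ and hence the pointwise bound $A_\Lambda^{\mathfrak M} f \le \sum_Q A_\Lambda^{\mathfrak M_Q} f$ holds. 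Since there are only $O(\log \Lambda)$ dyadic values of $Q$, it suffices to prove, for $p$ in the stated range, an estimate of the form $\| A_\Lambda^{\mathfrak M_Q} \|_{\ell^p \to \ell^p} \lesssim Q^{-c}$ with some $c = c(d,k,p) > 0$; summing the resulting geometric series in $Q$ then yields \eqref{eq:major_arc_bound} with a constant independent of $\Lambda$.

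To analyze the block $\mathfrak M_Q$ I would, on each constituent arc $\mathfrak M(a/q)$, substitute $\theta = a/q + \beta$ and carry out the standard major-arc computation from Waring's problem: splitting the summation variable $\mathbf x$ into residue classes modulo $q$ factors the exponential sum defining $\widehat{h_\Lambda(a/q+\beta)}$ into a complete $d$-dimensional Gauss sum $G(q;a,\cdot)$ of degree $k$ times a slowly varying oscillatory integral, and Poisson summation in $\mathbf x$ converts the latter (after integrating in $\beta$ and inserting the smooth cutoff $\psi$) into $\widetilde{d\sigma_\lambda}$ localized near the rationals $\mathbf b/q$, with the $\{\pm1\}^d$-sum accounting for the absolute values in $\mathfrak f$. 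This produces an approximation of the multiplier $\widehat{A_\lambda^{\mathfrak M_Q}}$ by a ``main term'' of precisely the shape in Theorem \ref{thm:approximation_formula}, but with $q$ restricted to $[Q,2Q)$, up to an error whose associated dyadic maximal operator is negligible on $\ell^p$ (this is where the bookkeeping of the approximation formula and the decay of $\widetilde{d\sigma_\lambda}$ coming from the curvature of $\Sigma_\lambda$ away from the coordinate hyperplanes enter).

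With the main-term multiplier in hand, I would apply the Magyar--Stein--Wainger transference lemma to bound the $\ell^p(\mathbb Z^d)$ norm of its associated maximal operator by a product of two factors. The first is the $L^p(\mathbb R^d)$ operator norm of the continuous $k$-spherical maximal function $g \mapsto \sup_\lambda |d\sigma_\lambda * g|$, which is $O(1)$, uniformly in $Q$, since this operator is known to be bounded on $L^p(\mathbb R^d)$ for $d > 2k$ and $p > d/(d-k)$. The second is the $\ell^p$ operator norm of the purely arithmetic maximal operator assembled from the Gauss sums with $q \in [Q,2Q)$; here I would use the Weyl-type estimate $|G(q;a,\boldsymbol w\mathbf b)| \lesssim q^{\epsilon - d/k}$, which on $\ell^2$ gives a gain of size $Q^{2 - d/k + \epsilon}$ — negative in the exponent exactly because $d > 2k$ — and interpolate this against a crude $\ell^1$ bound for the same operator, the threshold $p > d/(d-k)$ being precisely what makes the interpolated exponent negative. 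Combining the two factors yields the desired $Q^{-c}$.

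The step I expect to be the main obstacle is the honest error analysis underpinning the transference: one must verify that the passage from the genuine arithmetic exponential sum to the Gauss-sum-times-$\widetilde{d\sigma_\lambda}$ model, the Poisson summation, and the insertion of the cutoffs $\psi(q\boldsymbol\xi - \mathbf b)$ each produce errors controlled uniformly in both $\lambda \in [\Lambda/2,\Lambda)$ and $Q$, and that the resulting main-term multiplier genuinely factors through a continuous convolution operator so that the transference lemma is applicable. The two remaining ingredients — the continuous $k$-spherical maximal bound and the Gauss sum estimate — are classical and may be quoted.
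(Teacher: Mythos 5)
First, a point of order: the paper does not prove Proposition \ref{lemma:major_arc_bound} at all --- it is imported from Magyar \cite{Magyar_dyadic} (for $d\ge 2^k$) and Hughes \cite{Hughes_Vinogradov} (in the stated form) --- so your attempt is being measured against those references rather than against an argument in this text. Your architecture is the standard one from those sources: dyadic decomposition $\mathfrak M=\bigcup_Q\mathfrak M_Q$ in the denominator $q$, the major-arc asymptotics for $S_N$, the Magyar--Stein--Wainger transference principle, the continuous $k$-spherical maximal theorem, and the Gauss-sum decay interpolated via Proposition~2.2 of \cite{MSW} to give a block bound $Q^{2-\frac dk(2-2/p)+\eps}$, whose exponent is negative exactly when $p>\frac d{d-k}$. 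This main-term analysis is correct and coincides with what the paper itself does for $M_*$ in Section~\ref{sec:5}.

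The genuine gap is the step you flag as ``the main obstacle,'' and it is not a routine verification. The approximation errors (replacing $S_N$ by $G\cdot v_N$, inserting the cutoffs $\psi$, completing the $\theta$-integral) are accessible by the tools in your sketch only on $\ell^2$, via Lemma~\ref{lemma:L2bound}, where they carry gains of the form $Q^{-c}N^{-c'}$ as in \eqref{eq:4.3}--\eqref{eq:4.5}. To make them ``negligible on $\ell^p$'' down to $p=\frac d{d-k}$ you would need to interpolate against something near $\ell^1$, and the only bound available there is the trivial one \eqref{eq:l1}, of size $\Lambda|\mathfrak M_Q|\approx QN$, which grows with $\Lambda$. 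Interpolating $Q^{-c}N^{-c'}$ against $QN$ gives an $N$-exponent $\theta(1+c')-c'$ with $\theta=2/p-1$, which is negative only for $p>1+\frac 1{1+2c'}$; since $c'\le 1$ here, this threshold is at least $\frac43$ and so strictly exceeds $\frac d{d-k}$ as soon as $d>4k$. (The same obstruction is exactly why the paper's exponent $p_0(d,k)$ carries the competing term $1+\frac1{1+2\delta_0}$ on the minor arcs.) Thus your plan, as written, proves the proposition only on a strictly smaller range of $p$. Closing the gap requires a different mechanism for the error terms at small $p$ --- for instance, exhibiting them as (or comparing them with) periodized multipliers to which the vector-valued transference of \cite{MSW} applies directly, so that they too inherit $L^p(\mathbb R^d)$ input with $q$-decay --- and this is precisely the nontrivial content of the cited proofs. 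In the context of this paper the cleanest course is to do what the authors do: quote the result from \cite{Magyar_dyadic,Hughes_Vinogradov} rather than re-derive it.
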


This proposition suffices to establish the $\ell^p$-boundedness of the dyadic maximal functions $A_{\Lambda}$ (this is the main result of Magyar \cite{Magyar_dyadic}), but falls just short of what is needed for an equally quick proof of Theorem~\ref{thm:newmax}. Instead, we use Theorem~\ref{thm:approximation_formula} to approximate $A_*^{{\mathfrak M}}$ by a bounded operator. Let $M_\lambda^{a/q}$ denote the convolution operator on $\ell^p(\mathbb Z^d)$ with Fourier multiplier
\[ \widehat{M_\lambda^{a/q}}(\boldsymbol\xi) :=  e_q( -\lambda a) \sum_{\boldsymbol w \in \{ \pm 1 \}^d} \sum_{\mathbf b \in \mathbb Z^d} G(q; a, \boldsymbol w\mathbf b) \psi(q\boldsymbol\xi-\mathbf b) \widetilde{d\sigma_{{\lambda}}}(\boldsymbol w(\boldsymbol\xi-q^{-1}\mathbf b)), \]
and define
\[ M_\lambda := \sum_{q \in \mathbb N} \sum_{a \in \mathbb Z_q^*} M_\lambda^{a/q} \quad \text{ and } \quad M_* := \sup_{\lambda \in \mathbb N} |M_\lambda|. \] 
In Section~\ref{section:majorarcs}, we will handle the major arc approximations and prove the following proposition. 

\begin{pro}\label{lemma:majorarcs}
If $k \ge 3$, $d > 2k$, and $\frac{d}{d-k} < p \le 2$, then there exists an exponent $\beta_p = \beta_p(d,k) > 0$ such that 
\begin{equation}\label{eq:majorarcs}
\left\| \sup_{\lambda \in [\Lambda/2, \Lambda)} \big| A_\lambda^{{\mathfrak M}}-M_\lambda \big| \right\|_{\ell^p(\mathbb Z^d) \to \ell^p(\mathbb Z^d)} \lesssim \Lambda^{-\beta_p}.
\end{equation}
\end{pro}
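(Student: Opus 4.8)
The plan is to estimate the difference $A_\lambda^{\mathfrak M} - M_\lambda$ arc-by-arc, exploiting two sources of gain: the Gauss sum decay in $q$, and the error incurred when replacing the exact exponential sum over the lattice points on $\mathfrak{f}(\mathbf x) = \lambda$ inside a single major arc $\mathfrak M(a/q)$ by its main-term approximation involving $G(q;a,\boldsymbol w\mathbf b)$ and the continuous measure $\widetilde{d\sigma_\lambda}$. Write $\widehat{A_\lambda^{a/q}}(\boldsymbol\xi) - \widehat{M_\lambda^{a/q}}(\boldsymbol\xi)$ and decompose $A_\lambda^{\mathfrak M} - M_\lambda = \sum_{q\le N}\sum_{a\in\mathbb Z_q^*}(A_\lambda^{a/q} - M_\lambda^{a/q}) - \sum_{q > N}\sum_{a\in\mathbb Z_q^*} M_\lambda^{a/q}$. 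The tail sum over $q > N$ is controlled by the standard bound $|G(q;a,\boldsymbol w \mathbf b)| \lesssim q^{-\rho}$ for some $\rho = \rho(d,k) > 0$ (valid once $d > 2k$, via Weyl-type estimates for complete Gauss sums — this is exactly where the hypothesis $d > 2k$ and the range $p > \frac{d}{d-k}$ enter, as in \cite{Magyar_dyadic, Hughes_Vinogradov}), together with the decay $|\widetilde{d\sigma_\lambda}(\boldsymbol\eta)| \lesssim (1+N|\boldsymbol\eta|)^{-(d-1)/k}$ of the Fourier transform of the surface measure; summing the resulting geometric-type series in $q > N$ produces a power saving $\Lambda^{-\beta_p}$.

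For the main range $q \le N$, the idea is to further split each major arc according to the distance of $\boldsymbol\xi$ from the rational point $\mathbf b/q$ (a dyadic decomposition in $|\boldsymbol\xi - \mathbf b/q| \sim 2^\ell/(qN^{k-1})$), and on each piece to compare the genuine average $A_\lambda^{a/q}$ with $M_\lambda^{a/q}$ using the classical major-arc asymptotic from Waring's problem: the exponential sum $\sum_{\mathfrak f(\mathbf x)=\lambda} e(\mathbf x\cdot\boldsymbol\xi)$ restricted near $a/q$ equals $G(q;a,\boldsymbol w\mathbf b)$ times the oscillatory integral approximating $\widetilde{d\sigma_\lambda}$, up to an error that is a power of $N$ smaller. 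This is the content of the analysis of Magyar–Stein–Wainger and Magyar that was recalled in Section~\ref{section:previous} (cf.\ Proposition~\ref{lemma:major_arc_bound}), and the bump function $\psi(q\boldsymbol\xi - \mathbf b)$ is precisely what localizes the multiplier to the arc. One then passes from the fixed-$\lambda$ multiplier bound to the maximal operator over $\lambda \in [\Lambda/2,\Lambda)$ by a square-function / Sobolev embedding argument in $\lambda$: bound $\sup_\lambda |(A_\lambda^{\mathfrak M} - M_\lambda)f|$ by controlling both the operator and its $\lambda$-derivative in $\ell^2$, then interpolate with the trivial $\ell^1$ bound \eqref{eq:l1} — this yields the $\ell^p$ estimate with a power-saving $\beta_p > 0$ for $\frac{d}{d-k} < p \le 2$, with $\beta_p \to 0$ as $p \to \frac{d}{d-k}$.

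The main obstacle is the passage to the supremum over the continuous range of $\lambda$ while retaining a power-saving decay in $\Lambda$: the pointwise bound for each individual $A_\lambda^{a/q} - M_\lambda^{a/q}$ is straightforward, but summing the (infinitely many) arcs $q \ge 1$ and simultaneously taking the supremum over $\lambda$ requires a genuinely $\ell^2$-based argument rather than an $\ell^\infty$-multiplier bound. The efficient way to do this is to introduce, for each $q$, the partial operator $M_\lambda^{(q)} = \sum_{a\in\mathbb Z_q^*}M_\lambda^{a/q}$ with multiplier supported in disjoint $q$-dependent neighborhoods of the rationals with denominator $q$; the disjointness of these supports (for fixed $q$) lets one control $\sup_\lambda |M_\lambda^{(q)} f|$ on $\ell^2$ by $q^{-\rho}$ times a transference of the continuous $k$-spherical maximal function of Stein–Wainger type (whose $\ell^2$-boundedness for $d > 2k$ is classical), and then the sum over $q$ of $q^{-\rho}$ converges. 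The same mechanism, applied to $A_\lambda^{a/q} - M_\lambda^{a/q}$ with the extra $N^{-c}$ gain from the Waring error term, delivers the claimed $\Lambda^{-\beta_p}$; assembling these pieces and tracking the dependence of the exponents on $d,k,p$ to check $\beta_p > 0$ in the stated range is the remaining bookkeeping.
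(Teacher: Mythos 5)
Your overall architecture---compare $A_\lambda^{a/q}$ with $M_\lambda^{a/q}$ arc by arc via the classical major-arc asymptotics for Weyl sums, use Gauss-sum decay to sum over $q$, and treat the tail $q>N$ separately---is the same as the paper's, which runs through a chain of approximations $A_\lambda^{a/q}\to B_\lambda^{a/q}\to C_\lambda^{a/q}\to D_\lambda^{a/q}=M_\lambda^{a/q}$ built on the Br\"udern--Robert expansion and the bounds \eqref{eq:basicbounds}. (In your favour: you explicitly isolate the tail $\sum_{q>N}M_\lambda^{a/q}$, which the paper handles only implicitly; the Gauss-sum decay $q^{-d/k+\eps}$ together with $d>2k$ does control it.) However, two of your key steps would not go through as described.

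First, the passage to the supremum over $\lambda$. You propose a square-function/Sobolev-embedding argument in $\lambda$, controlling the operator and its $\lambda$-derivative in $\ell^2$. The fixed-$\lambda$ savings available here are small (e.g.\ $N^{-1/2}=\Lambda^{-1/(2k)}$ from inserting the bump functions), while the $\lambda$-derivative of the multipliers carries no compensating decay: differentiating $e(-\lambda\theta)$ brings down $\theta$, which is of size $a/q=O(1)$, so the standard bound $\sup_\lambda|g(\lambda)|^2\lesssim\Lambda^{-1}\sum_\lambda|g|^2+\sum_\lambda|g||g'|$ loses a factor of order $\Lambda^{1/2}$ that swamps the gain; even after stripping the unimodular factor $e_q(-\lambda a)$ on each arc, the residual loss at $q=1$ is $N^{1/2}$, which exactly cancels the saving. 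The paper sidesteps this entirely with Lemma~\ref{lemma:L2bound}: every multiplier in the chain has the form $\int_{\mathfrak B}K(\theta;\boldsymbol\xi)e(-\lambda\theta)\,d\theta$ with $K$ independent of $\lambda$, so the dyadic maximal operator on $\ell^2$ is bounded by $\int_{\mathfrak B}\sup_{\boldsymbol\xi}|K(\theta;\boldsymbol\xi)|\,d\theta$ at no cost. You need this lossless device (or an equivalent), not Sobolev in $\lambda$.

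Second, the interpolation. Interpolating the $\ell^2$ power saving against the trivial $\ell^1$ bound \eqref{eq:l1} cannot yield $\beta_p>0$ on the full range $p>\frac{d}{d-k}$: since $\Lambda|\mathfrak M|\approx\Lambda^{2/k}$ grows, that interpolation gives decay only for $p$ in a window near $2$ which does not extend down to $\frac{d}{d-k}$ as $d\to\infty$ (and \eqref{eq:l1} does not even apply to $M_\lambda$, which is a sum over all $q$ and is not of the form $A_\lambda^{\mathfrak B}$). The paper instead interpolates the $\ell^2$ saving \eqref{interpolate_major_arcs1} against the \emph{uniform} $\ell^p$ bound \eqref{interpolate_major_arcs2}, valid for all $p>\frac{d}{d-k}$ by Propositions~\ref{lemma:major_arc_bound} and~\ref{lemma:mainterm}; this is what produces $\beta_p>0$ on the stated range, with $\beta_p\to0$ as $p\to\frac{d}{d-k}$. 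Your transference remark for the $M$-part is essentially Proposition~\ref{lemma:mainterm} and is the right ingredient---but it must serve as the second interpolation endpoint, replacing the $\ell^1$ bound.
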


\begin{remark}
Theorem \ref{thm:approximation_formula} is an immediate consequence of Propositions \ref{lemma:minor_arc_approximation} and~\ref{lemma:majorarcs}. Moreover, since when $p = 2$ and $d \ge \frac 52k$, inequality \eqref{eq:majorarcs} holds for any $\beta_2 < 1/(2k)$ (see inequality \eqref{eq:4.8} and the comments after it), the error bound \eqref{eq:dyadic_error_bound} holds for all $\delta$ with $0 < \delta < \min \{ \delta_0(d,k), 1/(2k) \}$. 
\end{remark}  

When we sum \eqref{eq:majorarcs} over dyadic $\Lambda = 2^j$, we deduce that
\[ \left\| \sup_{\lambda \gtrsim 1} \big| A_\lambda^{{\mathfrak M}}-M_\lambda \big| \right\|_{\ell^p(\mathbb Z^d) \to \ell^p(\mathbb Z^d)} \lesssim 1. \]
Combining this bound and \eqref{eq:main_minor_arc}, we conclude that the $\ell^p$-boundedness of the maximal operator $A_*$ follows from the $\ell^p$-boundedness of $M_*$. The following proposition, which we establish in Section~\ref{sec:5}, then completes the proof of Theorem \ref{thm:newmax}.

\begin{pro}\label{lemma:mainterm}
If $k \ge 3$, $d> 2k$, and $\frac{d}{d-k} < p \le 2$, then $M_*$ is bounded on $\ell^p(\mathbb Z^d)$.  
\end{pro}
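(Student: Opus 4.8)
We prove Proposition~\ref{lemma:mainterm} by the transference method of Magyar, Stein, and Wainger \cite{MSW}, following the treatment of the main term in Magyar \cite{Magyar_dyadic} and Hughes \cite{Hughes_Vinogradov}; since the multiplier $\widehat{M_\lambda^{a/q}}$ has exactly the same shape as in those works, the argument is essentially a reprise of theirs, and we only indicate its structure. Decompose $M_*$ according to the denominator by setting $M_*^q := \sup_{\lambda\in\mathbb N}\bigl|\sum_{a\in\mathbb Z_q^*}M_\lambda^{a/q}\bigr|$, so that $M_* \le \sum_{q\ge 1}M_*^q$ by the triangle inequality. It then suffices to exhibit an exponent $\kappa=\kappa(d,k,p)>0$ with $\|M_*^q\|_{\ell^p(\mathbb Z^d)\to\ell^p(\mathbb Z^d)}\lesssim q^{-\kappa}$ for all $q\ge1$, since then the geometric series converges.

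Fix $q$. The multiplier $\widehat{M_\lambda^{a/q}}$ is, up to the Gauss sum weights $G(q;a,\boldsymbol w\mathbf b)$, a modulated and frequency-localized copy of the Euclidean multiplier $\widetilde{d\sigma_\lambda}$, periodized by the bump $\psi(q\boldsymbol\xi-\mathbf b)$ and summed over the $2^d$ sign patterns $\boldsymbol w\in\{\pm1\}^d$. Since $\operatorname{supp}\psi\subseteq[-\tfrac14,\tfrac14]^d$, the frequency supports of the summands for distinct $\mathbf b \bmod q$ are pairwise disjoint, so the transference/sampling lemma of \cite{MSW} (see also \cite{Magyar_dyadic}) applies and gives
\[ \|M_*^q\|_{\ell^p(\mathbb Z^d)\to\ell^p(\mathbb Z^d)} \lesssim \Bigl(\sum_{a\in\mathbb Z_q^*}\|\nu_{q,a}\|_{\ell^p(\mathbb Z^d)\to\ell^p(\mathbb Z^d)}\Bigr)\,\bigl\|\widetilde S_*\bigr\|_{L^p(\mathbb R^d)\to L^p(\mathbb R^d)}, \]
where $\nu_{q,a}$ is the \emph{purely periodic} multiplier $\sum_{\mathbf b\in\mathbb Z^d}G(q;a,\mathbf b)\psi'(q\boldsymbol\xi-\mathbf b)$ for a fixed fattened bump $\psi'$, and $\widetilde S_*$ is the frequency-localized continuous $k$-spherical maximal operator on $\mathbb R^d$, i.e. the maximal operator over dilates of a smooth compactly supported cutoff of surface measure on $\{|x_1|^k+\dots+|x_d|^k=1\}$; the supremum over $\lambda\in\mathbb N$ on the left is dominated by the supremum over $t=\lambda^{1/k}\in[1,\infty)$ on the right.

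The two factors are estimated separately. A short computation identifies $\nu_{q,a}$ with convolution by the kernel $q^{-d}\Psi(\mathbf n/q)\,e_q\bigl(a\mathfrak f(\mathbf n)\bigr)$ on $\mathbb Z^d$, where $\Psi$ is the inverse Fourier transform of $\psi'$ (a fixed Schwartz function); hence $\|\nu_{q,a}\|_{\ell^1\to\ell^1}\lesssim q^{-d}\sum_{\mathbf n\in\mathbb Z^d}|\Psi(\mathbf n/q)|\lesssim 1$, while $\|\nu_{q,a}\|_{\ell^2\to\ell^2}\lesssim\sup_{\mathbf b}|G(q;a,\mathbf b)|\lesssim_{\eps} q^{\eps-d/k}$ by the classical bound for $k$-th power Gauss sums with $(a,q)=1$. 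Interpolation yields $\|\nu_{q,a}\|_{\ell^p\to\ell^p}\lesssim q^{(\eps-d/k)\cdot 2/p'}$ for $1\le p\le 2$, and summing over the at most $q$ admissible $a$ bounds the arithmetic factor by $q^{1+\eps-2d/(kp')}$. For the analytic factor, $\{|x_1|^k+\dots+|x_d|^k=1\}$ is a convex hypersurface with nonvanishing Gaussian curvature off the coordinate hyperplanes, and the corresponding (frequency-localized) maximal operator $\widetilde S_*$ is bounded on $L^p(\mathbb R^d)$ throughout — indeed beyond — the range $\frac{d}{d-k}<p\le2$, by the standard theory of maximal averages over finite-type hypersurfaces (see \cite{Magyar_dyadic} and the references therein). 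Altogether $\|M_*^q\|_{\ell^p\to\ell^p}\lesssim q^{1+\eps-2d/(kp')}$, and $\sum_{q\ge1}q^{1+\eps-2d/(kp')}<\infty$ precisely when $2d/(kp')>2$, that is $d/p'>k$, that is $p>\frac{d}{d-k}$ (after choosing $\eps$ small enough); at the endpoint $p=2$ this condition reads $d>2k$. This proves the proposition.

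The only substantive point is the transference/sampling step leading to the displayed inequality: one must pass from $\ell^p(\mathbb Z^d)$ to $L^p(\mathbb R^d)$ so that every power of $q$ is carried by the periodic factor $\nu_{q,a}$, while correctly transferring the supremum over $\lambda$ to the continuous maximal operator $\widetilde S_*$ and handling the finite sum over sign patterns $\boldsymbol w$ together with the compatibility of the cutoffs $\psi(q\boldsymbol\xi-\mathbf b)$ with the behaviour of $\widetilde{d\sigma_\lambda}$ near the origin. As this is exactly the argument of \cite{MSW,Magyar_dyadic} carried out for general degree $k$ in \cite{Hughes_Vinogradov}, we shall present only the modifications needed in our setting.
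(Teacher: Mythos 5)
Your argument is essentially the paper's own proof: the same Magyar--Stein--Wainger factorization of $\widehat{M_\lambda^{a/q}}$ into an arithmetic multiplier (Gauss sums times a fattened bump, bounded by interpolating the trivial $\ell^1$ bound with the $\ell^2$ bound $q^{-d/k+\eps}$) composed with a $\lambda$-independent transference of the continuous $k$-spherical maximal operator, followed by summation over $a$ and $q$ using $p>\frac{d}{d-k}$, with the same final exponent $q^{-\frac dk(2-2/p)+\eps}$. The only difference is that where you invoke the general theory of maximal averages over finite-type hypersurfaces for the continuous operator, the paper proves the needed $L^p(\mathbb R^d)$ bound directly (Lemma \ref{lemma:Archimedean_max_fxn}) via Rubio de Francia's theorem and the explicit Fourier decay $\widetilde\mu(\boldsymbol\xi)\lesssim(1+|\boldsymbol\xi|)^{1-d/k}$ --- which is the right thing to do, since the curvature of $\{|x_1|^k+\dots+|x_d|^k=1\}$ degenerates on the coordinate hyperplanes and the decay rate $d/k-1$ (not $(d-1)/2$) is what determines the admissible range of $p$.
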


\section{Minor Arc Analysis}
\label{section:minorarcs}

Our minor arc analysis splits naturally in two steps. The first step is a reduction to mean value estimates related to Waring's problem; for this we use a technique introduced in \cite{ACHK:WaringGoldbach}. We then apply recent work on Waring's problem to estimate the relevant mean values and to prove Proposition~\ref{lemma:minor_arc_approximation}.

\subsection{Reduction to mean value theorems}
\label{section:reduction}

The reduction step is based on the following lemma, a special case of Lemma 7 in~\cite{ACHK:WaringGoldbach}. In the present form, the result is a slight variation of Lemma 4.2 in \cite{Hughes_Vinogradov} and is implicit also in \cite{MSW}. 

\begin{lemma}\label{lemma:L2bound}
For $\lambda \in \mathbb N$, let $T_\lambda$ be a convolution operator on $\ell^2(\mathbb Z^d)$ with Fourier multiplier given by
\[ \widehat{T_\lambda}(\boldsymbol\xi) := \int_{\mathfrak{B}} K(\theta; \boldsymbol\xi) e(-\lambda \theta) \, d\theta, \]
where $\mathfrak{B} \subseteq \mathbb T$ is a measurable set and $K( \cdot; \boldsymbol\xi) \in L^1(\mathbb T)$ is a kernel independent of $\lambda$. Further, for $\Lambda \ge 2$, define the dyadic maximal functions  
\[ T_*f(\mathbf x) = T_*(\mathbf x; \Lambda) := \sup_{\lambda \in [\Lambda/2,\Lambda)} |T_\lambda f(\mathbf x)|. \]
Then 
\begin{equation}\label{eq:lem3.1} 
\| T_* \|_{\ell^2(\mathbb Z^d) \to \ell^2(\mathbb Z^d)} \le \int_{\mathfrak{B}} \sup_{\boldsymbol\xi \in \mathbb T^d}  | K(\theta; \boldsymbol\xi) | \, d\theta. 
\end{equation}
\end{lemma}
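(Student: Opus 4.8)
The plan is to shift all of the $\lambda$-dependence of $T_\lambda$ onto the oscillatory factor $e(-\lambda\theta)$ and then discard it essentially for free, invoking Plancherel only at the very end. We may assume the right-hand side of \eqref{eq:lem3.1} is finite, so that $\theta \mapsto \sup_{\boldsymbol\xi}|K(\theta;\boldsymbol\xi)|$ is integrable on $\mathfrak B$ and in particular $\sup_{\boldsymbol\xi \in \mathbb T^d}|K(\theta;\boldsymbol\xi)| < \infty$ for a.e.\ $\theta \in \mathfrak B$. For such $\theta$ I would define $g_\theta \in \ell^2(\mathbb Z^d)$ by $\widehat{g_\theta}(\boldsymbol\xi) := K(\theta;\boldsymbol\xi)\widehat f(\boldsymbol\xi)$ (the right-hand side lies in $L^2(\mathbb T^d)$ since $K(\theta;\cdot)$ is bounded and $\widehat f \in L^2(\mathbb T^d)$). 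Inserting the definition of $\widehat{T_\lambda}$ into $\widehat{T_\lambda f} = \widehat{T_\lambda}\,\widehat f$ and interchanging the $\theta$-integral with the inverse $\mathbb Z^d$-Fourier transform — legitimate by Fubini's theorem together with the inclusion $L^2(\mathbb T^d) \subseteq L^1(\mathbb T^d)$ on the finite-measure space $\mathbb T^d$ — one obtains the pointwise representation
\[ T_\lambda f(\mathbf x) = \int_{\mathfrak B} g_\theta(\mathbf x)\, e(-\lambda\theta)\, d\theta \qquad (\mathbf x \in \mathbb Z^d). \]

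The crucial point is now simply that $|e(-\lambda\theta)| = 1$, so that the triangle inequality for integrals gives
\[ |T_\lambda f(\mathbf x)| \le \int_{\mathfrak B} |g_\theta(\mathbf x)|\, d\theta \]
for \emph{every} $\lambda$, with a right-hand side independent of $\lambda$ (and of $\Lambda$); hence $T_* f(\mathbf x) \le \int_{\mathfrak B} |g_\theta(\mathbf x)|\, d\theta$ as well. Taking $\ell^2(\mathbb Z^d)$-norms and moving the norm inside the integral by Minkowski's integral inequality yields $\|T_* f\|_{\ell^2(\mathbb Z^d)} \le \int_{\mathfrak B} \|g_\theta\|_{\ell^2(\mathbb Z^d)}\, d\theta$. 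Finally, Plancherel's theorem on $\mathbb Z^d$ gives, for a.e.\ $\theta \in \mathfrak B$,
\[ \|g_\theta\|_{\ell^2(\mathbb Z^d)} = \big\| K(\theta;\cdot)\,\widehat f \big\|_{L^2(\mathbb T^d)} \le \Big( \sup_{\boldsymbol\xi \in \mathbb T^d} |K(\theta;\boldsymbol\xi)| \Big) \big\| \widehat f \big\|_{L^2(\mathbb T^d)} = \Big( \sup_{\boldsymbol\xi \in \mathbb T^d} |K(\theta;\boldsymbol\xi)| \Big) \|f\|_{\ell^2(\mathbb Z^d)}, \]
and combining the last two displays produces exactly \eqref{eq:lem3.1}.

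There is no real obstacle here: the entire content is the interchange of integrals in the first step and the (easy but essential) observation that the maximal function over $\lambda \in [\Lambda/2,\Lambda)$ may be dominated by the single $\Lambda$-free quantity $\int_{\mathfrak B} |g_\theta(\mathbf x)|\, d\theta$, rather than, say, by a union bound over the $\asymp \Lambda$ admissible integers $\lambda$, which would lose a factor of $\Lambda$. Should one wish to avoid any measurability bookkeeping, one can first establish the estimate for finitely supported $f$ — for which $\widehat f$ is a trigonometric polynomial and every step above is manifestly valid — and then pass to general $f \in \ell^2(\mathbb Z^d)$ by density.
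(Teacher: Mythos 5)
Your proof is correct and is essentially the argument behind the result the paper merely cites (Lemma 7 of the authors' earlier work and Lemma 4.2 of Hughes); no proof is given in this paper itself. The key step --- representing $T_\lambda f(\mathbf x)=\int_{\mathfrak B}g_\theta(\mathbf x)e(-\lambda\theta)\,d\theta$ and exploiting $|e(-\lambda\theta)|=1$ to get a majorant uniform in $\lambda$, then finishing with Minkowski's integral inequality and Plancherel --- is exactly the intended one.
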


For a measurable set $\mathfrak B \subset \mathbb T$, we have
\[ 
\widehat{A_\lambda^{\mathfrak B}}(\boldsymbol\xi) 
= \lambda^{1-d/k} \int_{\mathfrak B} \mathcal F_N(\theta; \boldsymbol\xi) e(-\lambda\theta) \, d\theta, \]
where
\[ 
\mathcal F_N(\theta; \boldsymbol\xi) := \prod_{j=1}^{d} S_N(\theta, \xi_j) 
\quad \text{with} \quad S_N(\theta, \xi) := \sum_{|n| \leq N} e(\theta |n|^k + \xi n). \]
Thus, in the proof of Proposition \ref{lemma:minor_arc_approximation}, we apply \eqref{eq:lem3.1} with $K = \mathcal F_N$ and $\mathfrak B = {\mathfrak m}$. The supremum over $\boldsymbol\xi$ on the right side of \eqref{eq:lem3.1} then stands in the way of a direct application of known results from analytic number theory. Our next lemma overcomes this obstacle; its proof is a variant of the argument leading to (12) in Wooley \cite{Wooley_asymptotic}.

\begin{lemma}\label{lemma:meanvaluereduction}
If $k \geq 2$, $l \leq k-1$ and $s$ are natural numbers and $\mathfrak B \subseteq \mathbb T$ a measurable set, then
\begin{equation}\label{eq:lem6.0}
\int_{\mathfrak B} \sup_{\xi \in \mathbb T} |S_N(\theta, \xi) |^{2s} \, d\theta \lesssim N^{l(l+1)/2} \int_{\mathfrak B} \int_{\mathbb T^l} \left| \sum_{n=1}^N e(\theta n^k + \xi_{l} n^{l} + \dots + \xi_1 n) \right|^{2s} d\boldsymbol\xi d\theta + 1.
\end{equation}
\end{lemma}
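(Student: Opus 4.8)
The plan is to pass from the one-dimensional exponential sum $S_N(\theta,\xi)$, whose phase only has a linear term in the variable $\xi$, to the full Vinogradov-type integral over $\mathbb T^l$, whose phase carries monomials of degrees $1,\dots,l$ in addition to the degree-$k$ term with coefficient $\theta$. The key device is that the supremum over $\xi \in \mathbb T$ can be removed at the cost of integrating against a suitable nonnegative kernel, after which a Weyl-differencing / translation-dilation argument manufactures the extra variables $\xi_1,\dots,\xi_l$ for free. First I would reduce $S_N(\theta,\xi) = \sum_{|n|\le N} e(\theta|n|^k + \xi n)$ to (a bounded number of copies of) the one-sided sum $\sum_{1\le n\le N} e(\theta n^k + \xi n)$, splitting off the $n=0$ term (contributing $O(1)$, which accounts for the additive $+1$ on the right side after raising to the power $2s$ and using $|a+b|^{2s} \lesssim |a|^{2s}+|b|^{2s}$) and combining the ranges $n>0$ and $n<0$, the latter contributing a sum of the same shape with $\xi$ replaced by $-\xi$; since we take a supremum over all $\xi\in\mathbb T$ this symmetry is harmless.

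Next, to eliminate the supremum: for each fixed $\theta$ let $\xi^*=\xi^*(\theta)$ be a value (or near-value) attaining the supremum of $|\sum_{n=1}^N e(\theta n^k+\xi n)|$. The trick, following Wooley, is to observe that shifting the summation variable $n\mapsto n+h$ changes the linear coefficient $\xi$ by a controlled amount while introducing lower-degree monomials in $n$ with coefficients that are polynomials in $\theta$ and $h$; averaging $|S|^{2s}$ over $h$ in a range of length $\asymp N$ and over the auxiliary linear parameter produces, after expanding the $k$-th power $(n+h)^k$ and collecting terms, exactly an expression dominated by $N^{l(l+1)/2}$ times an average of $\big|\sum_{n=1}^N e(\theta n^k + \eta_l n^l+\dots+\eta_1 n)\big|^{2s}$ over $(\eta_1,\dots,\eta_l)$ ranging over a box in $\mathbb R^l$; since the integrand is $1$-periodic in each $\eta_j$, this box average is comparable to the average over $\mathbb T^l$. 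The power $N^{l(l+1)/2}$ is precisely the product of the lengths of the ranges of the $l$ new coefficients: the coefficient of $n^j$ produced this way varies over an interval of length $\asymp N^{k-j}\cdot(\text{something})$, but the relevant normalization (dividing by the natural $N^{k}$ scale of $\theta$) leaves ranges of lengths $\asymp N, N^2,\dots, N^l$, whose product is $N^{1+2+\dots+l}=N^{l(l+1)/2}$. One must check that the change of variables is measure-preserving up to constants and that the restriction $\mathfrak B$ on $\theta$ is untouched, which it is since $\theta$ plays no role in the shifting argument.

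The main obstacle I anticipate is bookkeeping the coefficients generated by the shift $n\mapsto n+h$: expanding $\theta(n+h)^k$ yields $\theta\binom{k}{j}h^{k-j}n^j$ for each $j<k$, and one needs that the map $h\mapsto (\theta\binom{k}{1}h^{k-1},\dots,\theta\binom{k}{l}h^{k-l})$ — together with the independent linear shift — covers a box of the asserted dimensions with bounded multiplicity, uniformly in $\theta$ on the support. For $l\le k-1$ this is exactly in the range where such a covering works (the top relevant monomial is $n^{k-1}$), and degenerate behavior for very small $\theta$ is absorbed because we only need an upper bound and can enlarge the $\eta_j$-ranges to full periods. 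A second, more routine point is justifying the interchange of the supremum and the integrals: since $\theta\mapsto \sup_\xi|S_N(\theta,\xi)|$ is measurable (sup of a continuous function of $\xi$ over a compact set, jointly continuous in $(\theta,\xi)$), Fubini/Tonelli applies throughout because all integrands are nonnegative. Once these are in place, the claimed inequality \eqref{eq:lem6.0} follows by assembling the pieces and absorbing the finitely many $O(1)$ contributions into the $+1$.
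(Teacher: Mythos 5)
Your reduction to the one-sided sum $\widetilde S_N(\theta,\xi)=\sum_{1\le n\le N}e(\theta n^k+\xi n)$ and the handling of the $+1$ term match the paper. The central step, however --- removing $\sup_{\xi}$ and manufacturing the $l$ extra integration variables by shifting $n\mapsto n+h$ and averaging over $h$ --- does not work, and it is not what Wooley's argument leading to (12) does. Two concrete obstructions. First, the shifted sum $\sum_n e(\theta(n+h)^k+\xi(n+h))$ is literally the original sum over a translated range, so averaging its $2s$-th power over $h$ returns essentially $|\widetilde S_N|^{2s}$ again; re-expanding $(n+h)^k$ merely rewrites the same quantity with $h$-dependent coefficients and does not create an integral over new coefficients. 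Second, even granting the reinterpretation, the map $h\mapsto\big(\theta\binom{k}{1}h^{k-1},\dots,\theta\binom{k}{l}h^{k-l}\big)$ traces out a one-dimensional curve in $\mathbb T^l$ (two-dimensional with the auxiliary linear parameter), which cannot cover an $l$-dimensional box with bounded multiplicity once $l\ge 2$; and for $\theta$ near $0$ or near a rational with small denominator the image coefficients are trapped near the origin, so ``enlarging the ranges to full periods'' pushes the inequality in the wrong direction --- you would be bounding a large localized average by a smaller global one.

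The actual mechanism is combinatorial rather than a change of variables. Expand $\widetilde S_N(\theta,\xi)^s=\sum_{\mathbf n}e(\theta\mathfrak f_{s,k}(\mathbf n)+\xi\mathfrak f_{s,1}(\mathbf n))$ and sort the $s$-tuples $\mathbf n$ according to the values $h_j=\mathfrak f_{s,j}(\mathbf n)$ for $j=1,\dots,l$, writing $\widetilde S_N(\theta,\xi)^s=\sum_{\mathbf h}a_{\mathbf h}(\theta)e(\xi h_1)$ with $h_j\le H_j:=sN^j$. The only $\xi$-dependence is through the unimodular factors $e(\xi h_1)$, so Cauchy--Schwarz in $\mathbf h$ gives $\sup_\xi|\widetilde S_N(\theta,\xi)|^{2s}\le H_1\cdots H_l\sum_{\mathbf h}|a_{\mathbf h}(\theta)|^2$ with $H_1\cdots H_l\lesssim N^{l(l+1)/2}$; your heuristic for the exponent lands on the right number because it counts the same ranges of the $h_j$, but via the wrong mechanism. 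Finally, orthogonality identifies $\sum_{\mathbf h}|a_{\mathbf h}(\theta)|^2$ with $\int_{\mathbb T^l}\big|\sum_{n\le N}e(\theta n^k+\xi_l n^l+\dots+\xi_1 n)\big|^{2s}\,d\boldsymbol\xi$, and integrating over $\theta\in\mathfrak B$ finishes. Note that $\theta$ is a pure spectator throughout, which is why $\mathfrak B$ survives untouched --- that part of your plan is correct.
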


\begin{proof}
Define
\[ \widetilde{S}_N(\theta, \xi) = \sum_{1 \le n \le N} e(\theta n^k + \xi n). \]
Since
\begin{equation}\label{eq:lem3.2a}
\sup_{\xi \in \mathbb T} |S_N(\theta, \xi)| \le 2\sup_{\xi \in \mathbb T} |\widetilde S_N(\theta, \xi)| + 1, 
\end{equation}
it suffices to establish \eqref{eq:lem6.0} with $\widetilde{S}_N$ in place of $S_N$. 

Set $H_j = sN^j$. For $\mathbf h = (h_1, \dots, h_l) \in \mathbb Z^l$, we define 
\[ a_{\mathbf h}(\theta) = \sum_{1 \le n_1, \dots, n_s \le N} \delta(\mathbf n; \mathbf h) e(\theta \mathfrak f_{s,k}(\mathbf n)), \]
where 
\[ \delta(\mathbf n; \mathbf h) = \begin{cases} 1 & \text{if } \mathfrak f_{s,j}(\mathbf n) = h_j \text{ for all } j = 1, \dots, l, \\ 0 & \text{otherwise}. \end{cases} \]
We have
\[ \widetilde S_N(\theta, \xi)^s = \sum_{h_1 \le H_1} \cdots \sum_{h_l \le H_l} a_{\mathbf h}(\theta)e(\xi h_1), \]
so by applying the Cauchy--Schwarz inequality we deduce that
\[ \sup_{\xi} |\widetilde S_N(\theta, \xi)|^{2s} \le H_1 \cdots H_l \sum_{h_1 \le H_1} \cdots \sum_{h_l \le H_l} |a_{\mathbf h}(\theta)|^2. \]
Hence, 
\begin{equation}\label{eq:lem6.1}
\int_{\mathfrak B} \sup_{\xi} |\widetilde S_N(\theta, \xi) |^{2s} \, d\theta \lesssim N^{l(l+1)/2} \int_{\mathfrak B} \sum_{h_1 \le H_1} \cdots \sum_{h_l \le H_l} a_{\mathbf h}(\theta)\overline{a_{\mathbf h}(\theta)} \, d\theta.
\end{equation} 

We have 
\[ \sum_{h_1 \le H_1} \cdots \sum_{h_l \le H_l} a_{\mathbf h}(\theta)\overline{a_{\mathbf h}(\theta)} = \sum_{1 \le \mathbf n, \mathbf m \le N} e\big(\theta( \mathfrak f_{s,k}(\mathbf n) - \mathfrak f_{s,k}(\mathbf m))\big) \sum_{h_1 \le H_1} \cdots \sum_{h_l \le H_l} \delta(\mathbf n; \mathbf h)\delta(\mathbf m; \mathbf h). \]
By orthogonality,   
\begin{align*} 
\sum_{h_1 \le H_1} \cdots \sum_{h_l \le H_l}  \delta(\mathbf n; \mathbf h)\delta(\mathbf m; \mathbf h) &= \sum_{h_1 \le H_1} \cdots \sum_{h_l \le H_l} \delta(\mathbf m; \mathbf h) \int_{\mathbb T^l} e\bigg( \sum_{j=1}^l \xi_j( \mathfrak f_{s,j}(\mathbf n) - h_j ) \bigg) \, d\boldsymbol\xi \\
&= \int_{\mathbb T^l} e\bigg( \sum_{j=1}^l \xi_j( \mathfrak f_{s,j}(\mathbf n) - \mathfrak f_{s,j}(\mathbf m)) \bigg) \, d\boldsymbol\xi,
\end{align*}
since for a fixed $\mathbf m$, the sum over $\mathbf h$ has exactly one term (in which $h_j = \mathfrak f_{s,j}(\mathbf m)$). Hence,  
\begin{equation}\label{eq:lem6.1a} 
\sum_{h_1 \le H_1} \cdots \sum_{h_l \le H_l}  a_{\mathbf h}(\theta)\overline{a_{\mathbf h}(\theta)} = \int_{\mathbb T^l} \left| \sum_{n=1}^N e( \theta n^k + \xi_ln^l + \dots + \xi_1n) \right|^{2s} d\boldsymbol\xi.
\end{equation} 
The lemma follows from \eqref{eq:lem3.2a}--\eqref{eq:lem6.1a}.
\end{proof}

\begin{remark}\label{rem:3.1}
With slight modifications, the argument of Lemma \ref{lemma:meanvaluereduction} yields also the following estimate
\[ \int_{\mathfrak B} \int_{\mathbb T} |S_N(\theta, \xi) |^{2s} \, d\xi d\theta \lesssim N^{l(l+1)/2-1} \int_{\mathfrak B} \int_{\mathbb T^l} \left| \sum_{n=1}^N e(\theta n^k + \xi_{l} n^{l} + \dots + \xi_1 n) \right|^{2s} d\boldsymbol\xi d\theta + 1. \]
\end{remark}

\subsection{Mean value theorems}
\label{section:VMVT}

We now recall several mean value estimates from the literature on Waring's problem. The first is implicit in the proof of Theorem~10 of Bourgain~\cite{Bourgain_Vinogradov}, which is a variant of a well-known lemma of Hua (see Lemma 2.5 in~\cite{Vaughan}). The present result follows from eqn. (6.6) in \cite{Bourgain_Vinogradov}. We note that when $l = k$, the left side of \eqref{eq:3.6} turns into Vinogradov's integral $J_{s,k}(N)$ and the lemma turns into the main result of Bourgain, Demeter and Guth \cite{BourgainDemeterGuth}.

\begin{lemma}\label{lemma:Bourgain-Hua}
If $k \geq 3$, $2 \leq l \leq k$ and $s \ge \frac 12l(l+1)$ are natural numbers, then 
\begin{equation}\label{eq:3.6}
\int_{\mathbb T} \int_{\mathbb T^{l-1}} \left| \sum_{n=1}^N e(\theta n^k + \xi_{l-1} n^{l-1} + \dots + \xi_1 n) \right|^{2s} d\boldsymbol\xi d\theta \lesssim 
N^{2s - l(l+1)/{2} + \eps}. 
\end{equation}
\end{lemma}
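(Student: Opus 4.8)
By orthogonality, the left-hand side of \eqref{eq:3.6} equals the number of solutions of the system
\[ n_1^j + \cdots + n_s^j = m_1^j + \cdots + m_s^j \qquad \bigl( j \in \{1, 2, \ldots, l-1, k\} \bigr), \qquad 1 \le n_i, m_i \le N; \]
equivalently, it is the $2s$-th moment over $\mathbb T^l$ of the exponential sum attached to the curve $\gamma(t) = (t, t^2, \ldots, t^{l-1}, t^k)$ in $\mathbb R^l$. The plan is to deduce this moment estimate from the $\ell^2$-decoupling theorem for nondegenerate curves; in the present range of exponents this is, in substance, the inequality recorded as eqn.~(6.6) in Bourgain \cite{Bourgain_Vinogradov}. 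A short computation gives that the Wronskian of $\gamma$ equals $c\, t^{k-l}$ with $c \ne 0$ (here the hypothesis $l \le k$ is used, to make the exponents $1, 2, \ldots, l-1, k$ distinct), so $\gamma$ is nondegenerate away from the origin; removing the degeneracy at $t = 0$ is the only point that needs care.

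First I would reduce to the critical exponent $s_0 := \tfrac12 l(l+1)$: when $s > s_0$ one estimates $2(s - s_0)$ of the $2s$ factors by the trivial bound $N$ and is left with the case $s = s_0$, which is why the hypothesis $s \ge s_0$ appears. Next, decompose $[1, N]$ into dyadic blocks $\{n \asymp M\}$ and split the exponential sum accordingly as $\sum_M f_M$; by Hölder's inequality this costs only a factor $(\log N)^{O(1)} = N^{o(1)}$ and reduces the problem to bounding $\int_{\mathbb T^l} |f_M|^{2s_0}\, d\boldsymbol\xi\, d\theta$ for each dyadic $M \le N$. On a single block, the substitution $n = Mu$ together with the integer change of variables $\theta \mapsto M^k \theta$, $\xi_j \mapsto M^j \xi_j$ on $\mathbb T^l$ (which merely permutes the torus) turns $f_M$ into an exponential sum over the curve $(u, u^2, \ldots, u^{l-1}, u^k)$ with $u$ ranging over $\tfrac1M$-spaced points of $[\tfrac12, 1]$, and on this fixed interval that curve has nonvanishing torsion with constants independent of $M$.

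I would then invoke the $\ell^2$-decoupling inequality for nondegenerate curves in $\mathbb R^l$ at the critical exponent $p = l(l+1)$ --- which follows from the method of Bourgain, Demeter and Guth \cite{BourgainDemeterGuth}, and is the ingredient behind the proof of Theorem~10 in \cite{Bourgain_Vinogradov} --- to conclude, for $s = s_0$, that
\[ \int_{\mathbb T^l} |f_M|^{2s_0}\, d\boldsymbol\xi\, d\theta \;\lesssim_\eps\; M^\eps \bigl( M^{s_0} + M^{2s_0 - l(l+1)/2} \bigr) \;=\; M^{l(l+1)/2 + \eps}. \]
Summing this over the $O(\log N)$ dyadic scales $M = 2^j \le N$ (a geometric series dominated by $M \asymp N$) yields $N^{l(l+1)/2 + \eps} = N^{2s_0 - l(l+1)/2 + \eps}$, and the trivial estimate on the excess factors upgrades this to $N^{2s - l(l+1)/2 + \eps}$ for all $s \ge s_0$, which is \eqref{eq:3.6}. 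The substantive obstacle is the decoupling theorem itself, which enters as a black box; the only genuine work on our side is the dyadic rescaling that circumvents the degeneracy of $\gamma$ at $t = 0$. When $l = k$ the curve $\gamma$ is the full moment curve and the whole argument reduces to the main theorem of \cite{BourgainDemeterGuth}.
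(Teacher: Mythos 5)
The paper offers no proof of Lemma \ref{lemma:Bourgain-Hua}: the estimate is imported directly from eqn.~(6.6) of Bourgain \cite{Bourgain_Vinogradov} (with the case $l=k$ identified as the Bourgain--Demeter--Guth theorem itself), so your write-up is a reconstruction of the cited argument rather than an alternative to anything in the text. As such a reconstruction it is correct in outline and consistent with the source: orthogonality, reduction to the critical exponent $s_0=\frac12 l(l+1)$ by the trivial bound on the excess $2(s-s_0)$ factors, dyadic decomposition with a H\"older loss of $(\log N)^{O(1)}$, rescaling of a block $(M/2,M]$ to $1/M$-separated points on the arc $(t,\dots,t^{l-1},t^k)$, $t\in[\frac12,1]$, whose Wronskian $c\,t^{k-l}$ indeed does not vanish there, and finally $\ell^2$-decoupling at the critical exponent $p=l(l+1)$, where the diagonal term $M^{s_0}$ and the term $M^{2s_0-l(l+1)/2}$ coincide. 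Two cautions. First, the map $\theta\mapsto M^k\theta$ is an $M^k$-fold cover of $\mathbb T$, not a permutation; the correct justification is that it preserves integrals of $1$-periodic functions (equivalently, that the block count equals a normalized $L^{2s_0}$-average over a ball of radius at least $M^k$, which one then rescales in the standard way). Second, and more substantively, the decoupling input you need is for the specific curve $(t,\dots,t^{l-1},t^k)$, which is not literally the statement proved in \cite{BourgainDemeterGuth} (the moment curve); one must either invoke the extension of $\ell^2$-decoupling to curves with nonvanishing Wronskian or follow Bourgain's own treatment in \S6 of \cite{Bourgain_Vinogradov}. You flag this as a black box, which is acceptable here since it is exactly the black box the paper itself leans on, but it is the one step where the phrase \emph{follows from the method of Bourgain--Demeter--Guth} is carrying real weight.
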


For small $k$, we will use another variant of Hua's lemma due to Br\"udern and Robert~\cite{BrudernRobert}. The following is a weak form of Lemma~5 in~\cite{BrudernRobert}. 

\begin{lemma}\label{lemma:Brudern}
If $k \geq 3$ and $2 \leq l \leq k$ are natural numbers, then
\begin{equation}\label{eq:3.7}
\int_{\mathbb T^2} \left| \sum_{n=1}^N e(\theta n^k + \xi n) \right|^{2^l+2} d\xi d\theta \lesssim N^{2^l-l+1+\eps}. 
\end{equation}
\end{lemma}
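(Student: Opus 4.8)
The estimate \eqref{eq:3.7} is the weak form of Lemma~5 of Br\"udern and Robert \cite{BrudernRobert} alluded to in its preamble, so the most direct plan is to deduce it from that reference; below I sketch the shape of the underlying argument and indicate where the real work lies. By orthogonality, with $f(\theta,\xi) := \sum_{n=1}^N e(\theta n^k + \xi n)$ the left side of \eqref{eq:3.7} equals $\int_{\mathbb T^2} |f(\theta,\xi)|^{2^l+2}\, d\theta\, d\xi$, which counts the solutions of the paired diagonal system $x_1^k + \dots + x_s^k = y_1^k + \dots + y_s^k$, $x_1 + \dots + x_s = y_1 + \dots + y_s$ in $1 \le x_i, y_i \le N$ with $2s = 2^l+2$. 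Simply discarding the linear equation gives $\int_{\mathbb T^2} |f|^{2^l+2} \le \int_{\mathbb T} |\sum_{n=1}^N e(\theta n^k)|^{2^l+2}\, d\theta \le N^2 \int_{\mathbb T} |\sum_{n=1}^N e(\theta n^k)|^{2^l}\, d\theta \ll N^{2^l-l+2+\eps}$ by the classical Hua inequality (Lemma~2.5 of \cite{Vaughan}, valid since $l \le k$), so the content of \eqref{eq:3.7} is precisely the extra saving of one power of $N$ that the linear equation should provide.

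To extract that saving I would split $|f(\theta,\xi)|^{2^l+2} = |f(\theta,\xi)|^2 \cdot |f(\theta,\xi)|^{2^l}$ and write $|f(\theta,\xi)|^2 = N + \sum_{0<|h|<N} e(\xi h)\,\gamma_h(\theta)$, where $\gamma_h(\theta) = \sum_m e(\theta((m+h)^k - m^k))$ is an exponential sum whose phase $(m+h)^k - m^k$ is a polynomial of degree $k-1$ in $m$. The contribution of the term $N$ is
\[ N \int_{\mathbb T^2} |f(\theta,\xi)|^{2^l}\, d\theta\, d\xi \le N\int_{\mathbb T} \Big|\sum_{n=1}^N e(\theta n^k)\Big|^{2^l}\, d\theta \ll N^{2^l-l+1+\eps}, \]
where the first inequality holds because imposing the linear equation only decreases the count and the second is Hua's inequality ($l \le k$); this matches the target size. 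For the remaining contribution $\int_{\mathbb T^2} |f(\theta,\xi)|^{2^l} \sum_{0<|h|<N} e(\xi h)\gamma_h(\theta)\, d\theta\, d\xi$ I would iterate Weyl differencing $l-1$ further times, reducing the degree of the $n^k$-phase to $k-l+1$, while using orthogonality in the variable $\xi$ dual to the linear equation at each stage to keep the extra power of $N$ visible.

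The main obstacle is carrying out this iteration without dissipating the factor of $N$ just gained: a naive two-step Cauchy--Schwarz (bounding $\sum_h \int |\gamma_h|^2 \ll N^2$ and controlling the surviving $f$-factor by its $2^{l+1}$-th moment via Hua) loses $N^{(l-1)/2}$ and already fails at $l=2$. Br\"udern and Robert instead run an inductive differencing argument in the spirit of the proof of Hua's inequality, in which the linear equation is retained through the differencing and supplies the extra saving of a power of $N$; for this technical core I would follow \cite[Lemma~5]{BrudernRobert}. Since only the qualitative form \eqref{eq:3.7} is needed here, the $N^\eps$ could be replaced by a power of $\log N$, but there is no need to track such refinements.
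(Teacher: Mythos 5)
Your proposal is correct and takes essentially the same route as the paper, which offers no proof of this lemma at all and simply cites it as a weak form of Lemma~5 of Br\"udern and Robert \cite{BrudernRobert}; your supplementary sketch (reduction to the paired diagonal system, the loss of exactly one power of $N$ when the linear equation is discarded, and the correct treatment of the diagonal term via Hua's inequality) is accurate and correctly locates the technical core in the reference's inductive differencing argument.
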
 

Note that by Remark \ref{rem:3.1} (with $l-1$ in place of $l$) and~Lemma \ref{lemma:Bourgain-Hua} we obtain a version of \eqref{eq:3.7} with $l(l+1)$ in place of $2^l + 2$. Together with Lemma~\ref{lemma:Brudern}, this observation yields the following bound.

\begin{cor}\label{cor:meanT2}
If $k \geq 3$ and $2 \leq l \leq k$ are natural numbers and $r \ge \min \{ l(l+1),2^l+2 \}$, then
\begin{equation}
\int_{\mathbb T^2} \left| \sum_{n=1}^N e(\theta n^k + \xi n) \right|^r d\xi d\theta \lesssim N^{r-l-1+\eps}. 
\end{equation}
\end{cor}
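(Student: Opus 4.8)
The plan is to prove the estimate for even exponents directly and then reduce odd exponents to the even case. Write $\widetilde S_N(\theta,\xi):=\sum_{n=1}^N e(\theta n^k+\xi n)$, put $I_r:=\int_{\mathbb T^2}|\widetilde S_N(\theta,\xi)|^r\,d\xi\,d\theta$, and set $r_0:=\min\{l(l+1),2^l+2\}$, which is even. I would first show $I_r\lesssim N^{r-l-1+\eps}$ for every even $r\ge l(l+1)$, and separately for every even $r\ge 2^l+2$; since each even $r\ge r_0$ meets at least one of these thresholds, the two cases together handle all even $r\ge r_0$, and the odd case then follows by one application of the trivial bound $|\widetilde S_N|\le N$.

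For the branch $r=2s$ with $s\ge\tfrac12 l(l+1)$: interpreting $I_r$ as counting solutions of $\sum_{i\le s}n_i=\sum_{i>s}n_i$, $\sum_{i\le s}n_i^k=\sum_{i>s}n_i^k$ in $[1,N]^{2s}$ and using $n^k=|n|^k$ for $n\ge1$, we have $I_r\le\int_{\mathbb T^2}|S_N(\theta,\xi)|^{2s}\,d\xi\,d\theta$ with $S_N(\theta,\xi)=\sum_{|n|\le N}e(\theta|n|^k+\xi n)$. Applying Remark~\ref{rem:3.1} with $l-1$ in place of $l$ and $\mathfrak B=\mathbb T$ gives
\[ I_r\;\lesssim\; N^{(l-1)l/2-1}\int_{\mathbb T}\int_{\mathbb T^{l-1}}\left|\sum_{n=1}^N e(\theta n^k+\xi_{l-1}n^{l-1}+\dots+\xi_1n)\right|^{2s}\,d\boldsymbol\xi\,d\theta\;+\;1, \]
and Lemma~\ref{lemma:Bourgain-Hua} (applicable since $s\ge\tfrac12 l(l+1)$) bounds the inner integral by $N^{2s-l(l+1)/2+\eps}$. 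Since $(l-1)l/2-l(l+1)/2=-l$, this yields $I_r\lesssim N^{2s-l-1+\eps}+1$, and the constant term is absorbed because $2s-l-1\ge l(l+1)-l-1=l^2-1>0$. For the branch $r\ge 2^l+2$: the case $r=2^l+2$ is precisely \eqref{eq:3.7} of Lemma~\ref{lemma:Brudern}, i.e. $I_{2^l+2}\lesssim N^{2^l-l+1+\eps}=N^{(2^l+2)-l-1+\eps}$, and for even $r>2^l+2$ one inserts $r-(2^l+2)$ factors of $|\widetilde S_N|\le N$ to get $I_r\le N^{r-2^l-2}I_{2^l+2}\lesssim N^{r-l-1+\eps}$.

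Combining the two branches covers all even $r\ge r_0$; for odd $r\ge r_0$ the exponent $r-1\ge r_0$ is even (as $r_0$ is), so $I_r\le N\,I_{r-1}\lesssim N^{r-l-1+\eps}$, which finishes the proof. I do not anticipate any genuine obstacle here — this is a bookkeeping corollary assembling Lemmas~\ref{lemma:Bourgain-Hua} and~\ref{lemma:Brudern} with Remark~\ref{rem:3.1}. The only points meriting a word of care are the harmless passage from the sum over $[1,N]$ in the statement to the symmetric sum $S_N$ appearing in Remark~\ref{rem:3.1} (handled above by monotonicity of solution counts, or equally well by noting that the argument of Lemma~\ref{lemma:meanvaluereduction} runs verbatim with $\widetilde S_N$ in place of $S_N$ and with no additive constant), and checking that the exponent $(l-1)l/2-1+2s-l(l+1)/2$ collapses to $2s-l-1$.
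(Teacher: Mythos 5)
Your argument is correct and is essentially the paper's own proof spelled out in detail: the paper likewise obtains the exponent-$l(l+1)$ case from Remark~\ref{rem:3.1} (with $l-1$ in place of $l$) combined with Lemma~\ref{lemma:Bourgain-Hua}, takes the exponent-$(2^l+2)$ case from Lemma~\ref{lemma:Brudern}, and extends to larger $r$ by the trivial bound. The only cosmetic remark is that your even/odd case split is unnecessary (and as written omits non-integer $r$): once the bound is known at $r_0=\min\{l(l+1),2^l+2\}$, the pointwise inequality $|\widetilde S_N|^r\le N^{r-r_0}|\widetilde S_N|^{r_0}$ gives all real $r\ge r_0$ in one step.
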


We also use a variant of Lemma \ref{lemma:Bourgain-Hua} that provides extra savings when the integration over $\theta$ is restricted to a set of minor arcs. Lemma \ref{lemma:Wooleyminorarc}, a slight modification of Theorem~1.3 in Wooley~\cite{Wooley_asymptotic}, improves on \eqref{eq:3.6} in the case $l = k-1$. 
Here, $\mathfrak m$ is the set of minor arcs defined at the beginning of~\S\ref{section:previous}.

\begin{lemma}\label{lemma:Wooleyminorarc}
If $k \geq 3$ and $s \ge \frac 12k(k+1)$ are natural numbers, then
\begin{equation}\label{eq:Wooleyminorarc}
\int_{{\mathfrak m}} \int_{\mathbb T^{k-2}} \left| \sum_{n=1}^N e(\theta n^k + \xi_{k-2} n^{k-2} + \dots + \xi_1 n) \right|^{2s} d{\boldsymbol\xi} d\theta \lesssim N^{2s - k(k - 1)/2-2+\eps}. 
\end{equation}
\end{lemma}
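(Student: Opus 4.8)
The plan is to adapt the proof of Theorem~1.3 of Wooley~\cite{Wooley_asymptotic}; the only differences with that result are notational, since here the exponential sum carries the lower-order monomials of degrees $1,\dots,k-2$, and which lower-order terms are present does not affect the argument. As in Lemma~\ref{lemma:meanvaluereduction} I would first pass, at the cost of an additive constant, to the sum over positive $n$, $g(\theta,\boldsymbol\xi):=\sum_{1\le n\le N}e(\theta n^k+\xi_{k-2}n^{k-2}+\dots+\xi_1 n)$.

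The first ingredient is a pointwise bound on the minor arcs, namely $\sup_{\boldsymbol\xi\in\mathbb T^{k-2}}|g(\theta,\boldsymbol\xi)|\lesssim N^{1-\tau_k+\eps}$ for $\theta\in\mathfrak m$. To prove this I would apply Dirichlet's theorem with parameter $Q=4kN^{k-1}$; together with the definition of the major arcs this shows that every $\theta\in\mathfrak m$ has a rational approximation $a/q$ with $(a,q)=1$, $N<q\le Q$ and $|\theta-a/q|\le q^{-2}$, and then Weyl's inequality---in the classical form, or in the sharper form following from the resolution of the main conjecture in Vinogradov's mean value theorem, cf.\ Bourgain~\cite{Bourgain_Vinogradov}---gives the bound, since it is governed only by the Diophantine nature of the leading coefficient $\theta$ and is insensitive to $\xi_1,\dots,\xi_{k-2}$.

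The second step combines this with the mean value estimates of \S\ref{section:VMVT}. I would dissect $\mathfrak m$ dyadically according to the size $q\sim 2^j$ of the Dirichlet denominator, so that the corresponding piece $\mathfrak m_j$ carries the measure bound $|\mathfrak m_j|\lesssim 2^j N^{1-k}$ in addition to the pointwise bound above, and on each $\mathfrak m_j$ I would apply H\"older's inequality, estimating a suitably chosen number of the $2s$ copies of $g$ by the pointwise Weyl bound and the rest by Lemma~\ref{lemma:Bourgain-Hua} with $l=k-1$ (equivalently, by the Bourgain--Demeter--Guth bound $J_{s,k}(N)\lesssim N^{2s-k(k+1)/2+\eps}$ after completing the absent degree-$(k-1)$ frequency; this is the range in which the mean value input is available, which is where the hypothesis $s\ge\frac12 k(k+1)$ enters). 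Summing over $j$ and optimizing the split of the $2s$ factors should then produce the exponent $2s-k(k-1)/2-2+\eps$.

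The hard part is the final optimization: one must arrange the H\"older split so that the full saving of $N^2$ over \eqref{eq:3.6} survives down to the critical exponent $s=\frac12 k(k+1)$, where very little slack is left for exploiting the minor-arc Weyl bound. This is the genuinely delicate point in Wooley's argument, and it depends on the precise interplay between the measure of $\mathfrak m_j$, the size of $g$ on $\mathfrak m_j$, and the Vinogradov mean value estimate, rather than on any single crude bound; the $\eps$-loss in \eqref{eq:3.6} is inherited here, which is why the estimate stops short of an endpoint.
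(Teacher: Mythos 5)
There is a genuine gap in your endgame, and it sits exactly where the lemma is used. Your scheme estimates $2(s-s_1)$ of the $2s$ factors by the minor-arc Weyl bound $N^{1-\tau_k+\eps}$ and the remaining $2s_1$ by a mean value estimate --- whether Lemma~\ref{lemma:Bourgain-Hua} with $l=k-1$, or the counting bound $N^{k-1}J_{s_1,k}(N)$ combined with Bourgain--Demeter--Guth. Either way the saving produced over \eqref{eq:3.6} is at most $N^{(2s-2s_1)\tau_k}$, so obtaining the required factor $N^{-2}$ forces $s-s_1\ge \tau_k^{-1}=\min\{2^{k-1},k^2-k\}$, i.e.\ $s$ must exceed the critical exponent by roughly $k^2-k$. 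At $s=\frac12k(k+1)$ there is nothing left to peel off: your argument returns only the bound of Lemma~\ref{lemma:Bourgain-Hua}, which is $N^{2}$ too large. But \eqref{eq:I_critical} applies the present lemma precisely at $2s=k(k+1)$, so the Weyl--H\"older route cannot deliver what the paper needs. The dyadic dissection in $q$ does not rescue this: $|{\mathfrak m}|=1-O(N^{2-k})$ is comparable to $1$, so there is no gain from the measure side, and the pointwise bound can only enter through factors removed from the mean value, which are unavailable at the critical exponent.

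You have also misattributed this mechanism to Wooley. Theorem~2.1 of \cite{Wooley_asymptotic} is not a Weyl-plus-H\"older argument; it is a counting argument exploiting the translation invariance of the Vinogradov system (a shift $n\mapsto n+y$, which, because the moments of degrees $1,\dots,k-2$ vanish, moves the degree-$k$ difference along an arithmetic progression determined by the degree-$(k-1)$ difference) together with a linear exponential-sum estimate driven by the minor-arc condition on $\theta$. Its output is the inequality
\[
\int_{{\mathfrak m}} \int_{\mathbb T^{k-2}} \bigg| \sum_{n=1}^N e(\theta n^k + \xi_{k-2} n^{k-2} + \dots + \xi_1 n) \bigg|^{2s} d{\boldsymbol\xi}\, d\theta \lesssim N^{k-2}(\log N)^{2s+1} J_{s,k}(2N),
\]
which is what the paper quotes (from p.~1495 of \cite{Wooley_asymptotic}) and then combines with $J_{s,k}(2N)\lesssim N^{2s-k(k+1)/2+\eps}$; since $k-2-\frac12k(k+1)=-\frac12k(k-1)-2$, this gives \eqref{eq:Wooleyminorarc}. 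A correct self-contained proof must reproduce that counting step; the route you describe cannot reach the critical exponent.
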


\begin{proof}
The main point in the proof of Theorem~2.1 in Wooley~\cite{Wooley_asymptotic} is the inequality (see p.~1495 in~\cite{Wooley_asymptotic})
\[ \int_{{\mathfrak m}} \int_{\mathbb T^{k-2}} \left| \sum_{n=1}^N e(\theta n^k + \xi_{k-2} n^{k-2} + \dots + \xi_1 n) \right|^{2s} d{\boldsymbol\xi} d\theta \lesssim N^{k-2}(\log N)^{2s+1} J_{s,k}(2N). \]
The lemma follows from this inequality and the Bourgain--Demeter--Guth bound for Vinogradov's integral $J_{s,k}(2N)$ (the case $l = k$ of \eqref{eq:3.6}). 
\end{proof}

Now we interpolate between the above bounds. 

\begin{lemma}\label{proposition:interpolation_savings}
If $k \geq 3$ and $2 \le l \le k-1$ are natural numbers and $r$ is real, with 
\begin{equation}\label{eq:p-range}
r_1(k,l) := k^2-\frac{k l - \min(l^2+l, 2^l+2)} {k-l+1} \le r \le k^2+k,
\end{equation}
then  
\begin{equation}\label{eq:savings}
I_{r,k}(N) := \int_{{\mathfrak m}} \sup_{\xi \in \mathbb T} \left| S_N(\theta, \xi) \right|^r d\theta \lesssim N^{r-k-\delta(r) +\eps},
\end{equation}
where $\delta(r)$ is the linear function of $r$ with values $\delta(r_1) = 0$ and $\delta(k^2+k) = 1$.
\end{lemma}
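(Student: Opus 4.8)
\emph{Proof strategy.} The plan is to establish \eqref{eq:savings} at two well-chosen ``anchor'' exponents and then recover the whole interval $[r_1(k,l),k^2+k]$ by H\"older interpolation, using that $\delta$ is affine. The key point is that one should not try to prove the bound directly at $r=r_1(k,l)$ --- there none of the available mean value inputs is sharp --- but rather anchor at the Hua/Br\"udern threshold $r=\mu_l:=\min(l^2+l,\,2^l+2)$ and at $r=k^2+k$, where the inputs are sharp; the value $r_1(k,l)$ then reappears as the break-even point of the interpolation. Throughout I write $g(\theta)=\sup_{\xi\in\mathbb T}|S_N(\theta,\xi)|$, so $I_{r,k}(N)=\int_{\mathfrak m}g(\theta)^r\,d\theta$.

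\emph{Low anchor ($r=\mu_l$).} Since ${\mathfrak m}\subseteq\mathbb T$, we have $I_{\mu_l,k}(N)\le\int_{\mathbb T}g(\theta)^{\mu_l}\,d\theta$. Applying Lemma~\ref{lemma:meanvaluereduction} with $\mathfrak B=\mathbb T$ and with the parameter there taken to be $1$,
\[ I_{\mu_l,k}(N)\lesssim N\int_{\mathbb T}\int_{\mathbb T}\left|\sum_{n=1}^{N}e(\theta n^{k}+\xi n)\right|^{\mu_l}\,d\xi\,d\theta+1, \]
and Corollary~\ref{cor:meanT2} (applicable at exponent exactly $\mu_l$, since $\mu_l=\min(l^2+l,2^l+2)$) bounds the inner integral by $N^{\mu_l-l-1+\eps}$. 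Hence $I_{\mu_l,k}(N)\lesssim N^{\mu_l-l+\eps}$. (Note $\mu_l$ is even, so the cited results apply verbatim; when $\mu_l=l^2+l$ one may alternatively reduce at level $l-1$ and quote Lemma~\ref{lemma:Bourgain-Hua}, with the same outcome.)

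\emph{High anchor ($r=k^2+k$).} Applying Lemma~\ref{lemma:meanvaluereduction} with $\mathfrak B={\mathfrak m}$ and parameter $k-2$,
\[ I_{k^2+k,k}(N)\lesssim N^{(k-2)(k-1)/2}\int_{\mathfrak m}\int_{\mathbb T^{k-2}}\left|\sum_{n=1}^{N}e(\theta n^{k}+\xi_{k-2}n^{k-2}+\dots+\xi_1n)\right|^{k^2+k}d{\boldsymbol\xi}\,d\theta+1. \]
Lemma~\ref{lemma:Wooleyminorarc} with $s=\tfrac12 k(k+1)$ bounds the inner integral by $N^{\,k^2+k-k(k-1)/2-2+\eps}$, and since $(k-2)(k-1)/2-k(k-1)/2=-(k-1)$, summing exponents gives
\[ I_{k^2+k,k}(N)\lesssim N^{(k-2)(k-1)/2+(k^2+k)-k(k-1)/2-2+\eps}=N^{k^2-1+\eps}. \]

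\emph{Interpolation and the obstacle.} For $r\in[\mu_l,k^2+k]$ write $r=(1-t)\mu_l+t(k^2+k)$ with $t\in[0,1]$. H\"older's inequality applied to $g^{r}=g^{(1-t)\mu_l}\cdot g^{t(k^2+k)}$ with conjugate exponents $(1-t)^{-1}$ and $t^{-1}$ yields
\[ I_{r,k}(N)\le I_{\mu_l,k}(N)^{1-t}\,I_{k^2+k,k}(N)^{t}\lesssim N^{(1-t)(\mu_l-l)+t(k^2-1)+\eps}. \]
The remaining (routine but load-bearing) task is to check, using \eqref{eq:p-range}, that the exponent $(1-t)(\mu_l-l)+t(k^2-1)$ equals $r-k-\delta(r)$ with $\delta$ the affine function satisfying $\delta(r_1(k,l))=0$ and $\delta(k^2+k)=1$; equivalently, that $r_1(k,l)$ is exactly the value of $r$ for which $(1-t)(\mu_l-l)+t(k^2-1)=r-k$. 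Granting this, and noting $\mu_l<r_1(k,l)<k^2+k$, we obtain \eqref{eq:savings} on the stated range; for $r\in[\mu_l,r_1(k,l))$ the same argument produces a bound with $\delta(r)<0$, i.e.\ no gain, which is precisely why the statement is restricted to $r\ge r_1(k,l)$. There is no analytic difficulty beyond the inputs already recorded; the only delicate point is this bookkeeping, which is where the somewhat unusual shape of $r_1(k,l)$ originates.
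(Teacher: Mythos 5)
Your proposal is correct and follows essentially the same route as the paper's proof: the same two anchors (at $r_0=\min(l^2+l,2^l+2)$ via Lemma~\ref{lemma:meanvaluereduction} with parameter $1$ plus Corollary~\ref{cor:meanT2}, and at $k^2+k$ via Lemma~\ref{lemma:meanvaluereduction} with parameter $k-2$ plus Lemma~\ref{lemma:Wooleyminorarc}), followed by the same H\"older interpolation, with $r_1(k,l)$ identified as the zero of the resulting affine $\delta$. The bookkeeping you defer does check out (the exponent is $r-k-\delta(r)$ with $\delta(r)=t(k-l+1)-(k-l)$ in your parametrization, whose zero is precisely $r_1(k,l)$), and the paper treats it at the same level of detail.
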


\begin{proof}
Let $r_0 = \min \{ l^2+l, 2^l+2 \}$. The hypothesis on $r$ implies that $r_0 < r \le k^2+k$, so we can find $t \in [0,1)$ such that $r = t r_0 + (1-t) (k^2 + k)$. By Lemma~\ref{lemma:meanvaluereduction} with $l = 1$ and Corollary \ref{cor:meanT2}, 
\begin{equation}\label{eq:I_p0}
I_{r_0,k}(N) \lesssim N \int_{\mathbb T^2} \left| \sum_{n=1}^N e(\theta n^k + \xi n) \right|^{r_0} d\xi d\theta \lesssim N^{r_0 - l + \eps}. 
\end{equation}
On the other hand, Lemma \ref{lemma:meanvaluereduction} with $l = k-2$ and Lemma \ref{lemma:Wooleyminorarc} yield
\begin{equation}\label{eq:I_critical}
I_{k(k+1),k}(N) \lesssim N^{(k-1)(k-2)/2} N^{k(k+1)-k(k-1)/2-2+\eps} = N^{k^2-1+\eps}. 
\end{equation}
Using H\"older's inequality and the above bounds, we get
\begin{align*}
I_{r,k}(N) &\lesssim I_{k(k+1),k}(N)^{1-t} I_{r_0,k}(N)^t \lesssim N^{(1-t)(k^2-1+\eps)} N^{t(r_0-l+\eps)} \\
&\lesssim N^{(1-t)(k^2+k) + t r_0} N^{-(1-t)(k+1)-tl + \eps} = N^{r-k-1+t(k-l+1) + \eps}.
\end{align*}
This inequality takes the form \eqref{eq:savings} with $ \delta(r) = 1 - t(k-l+1)$. Since $t$ depends linearly on $r$ and $t = 0$ when $r = k^2+k$, $\delta(r)$ is a linear function of $r$ with $\delta(k^2+k) = 1$. The value of $r_1(k,l)$ in \eqref{eq:p-range} is the unique solution of the linear equation $\delta(r) = 0$.
\end{proof}

\subsection{Proof of Proposition~\ref{lemma:minor_arc_approximation}}

By Lemma~\ref{lemma:L2bound} and the arithmetic-geometric mean inequality,  
\begin{equation}\label{eq:3.11}
\left\| \sup_{\lambda \in [\Lambda/2,\Lambda)} |A_\lambda^{{\mathfrak m}}| \right\|_{\ell^2(\mathbb Z^d) \to \ell^2(\mathbb Z^d)} \leq N^{k-d} \int_{{\mathfrak m}} \sup_{\boldsymbol\xi \in \mathbb T^{d}} \left| \mathcal F_N(\theta; \boldsymbol\xi) \right| \, d\theta \leq N^{k - d}I_{d,k}(N),
\end{equation}
where $I_{d,k}(N)$ is the integral defined in \eqref{eq:savings}. Thus, the proposition will follow, if we prove the inequality
\begin{equation}\label{eq:needed}
I_{d,k}(N) \lesssim N^{d - k - \delta + \eps}
\end{equation}
with $\delta = k\delta_0(d,k)$.

Let $l_0(k)$ denote the value of $l$ for which the maximum in the definition of $d_0(k)$ is attained (recall \eqref{eq:def_d0}). When $d_0(k) < d \le k^2+k$, we may apply \eqref{eq:savings} with $r=d$ and $l = l_0$ to deduce \eqref{eq:needed} with $\delta = \delta(d)$. We now observe that when $d \le k^2+k$, we have $\delta(d) = k\delta_0(d,k)$ and that the hypothesis $d > d_0(k)$ ensures that $\delta(d) > 0$. 

When $d > k^2+k$, we enhance our estimates with the help of the $L^\infty$-bound for $S_N(\theta, \xi)$ on the minor arcs: by combining a classical result of Weyl (see Lemma 2.4 in Vaughan \cite{Vaughan}) and Theorem~5 in Bourgain \cite{Bourgain_Vinogradov}, we have
\[ \sup_{(\theta, \xi) \in {\mathfrak m} \times \mathbb T} \left| S_N (\theta, \xi) \right| \lesssim
N^{1-\tau_k+\eps},  \]
$\tau_k$ being the quantity that appears in the definition of $\delta_0(d,k)$. Thus, when $d > k^2+k$, we have
\[ I_{d,k}(N) \lesssim N^{(d-k^2-k)(1-\tau_k) + \eps}I_{k(k+1),k}(N) \lesssim N^{d-k-1 - (d-k^2-k) \tau_k + \eps}. \]
We conclude that \eqref{eq:needed} holds with $\delta = 1 + (d - k^2-k)\tau_k$. \qed

\section{Major Arc Analysis}
\label{section:majorarcs}

We will proceed through a series of successive approximations to $A_\lambda^{a/q}$, which we will define by their Fourier multipliers. Our approximations are based on the following major arc approximation for exponential sums that appears as part of Theorem 3 of Br\"udern and Robert \cite{BrudernRobert}. In this result and throughout the section, we write
\[ G(q; a, b) := q^{-1}\sum_{x \in \mathbb Z_q} e_q( {ax^k + bx} ) 
\quad \text{ and } \quad 
v_N(\theta, \xi) := \int_0^N e( \theta t^k + \xi t) \, dt. \]

\begin{lemma}
Let $\theta, \xi \in \mathbb T$, $q \in \mathbb N$, $a \in \mathbb Z_q^*$, and $b \in \mathbb Z$, and suppose that
\[ \left| \theta - \frac aq \right| \leq \frac 1{4kqN^{k-1}}, \quad \left| \xi - \frac bq \right| \leq \frac 1{2q}. \]
Then
\[ \sum_{n \le N} e(\theta n^k + \xi n) = G(q; a,b)v_N(\theta - a/q, \xi - b/q) + O\big( q^{1-1/k+\eps} \big). \]
\end{lemma}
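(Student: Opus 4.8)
\section*{Proof proposal}

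The plan is to isolate the $q$-periodic part of the phase, expand it in additive characters modulo $q$, and reduce to classical estimates for exponential sums whose phase has small derivative. Write $\beta = \theta - a/q$ and $\eta = \xi - b/q$, so that $|\beta| \le (4kqN^{k-1})^{-1}$, $|\eta| \le (2q)^{-1}$, and $\theta n^k + \xi n = q^{-1}(an^k + bn) + \beta n^k + \eta n$. The function $n \mapsto e_q(an^k+bn)$ has period $q$, and orthogonality together with the definition of $G(q;\cdot,\cdot)$ gives the finite expansion $e_q(an^k + bn) = \sum_h G(q;a,b-h)\, e_q(hn)$, where $h$ runs over a residue system modulo $q$ taken balanced about $0$, so that $|h| \le q/2$. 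Inserting this and interchanging the two finite sums,
\[ \sum_{n \le N} e(\theta n^k + \xi n) = \sum_{|h| \le q/2} G(q;a,b-h)\, S_h, \qquad S_h := \sum_{n \le N} e\bigl(\beta n^k + (\eta + h/q)\, n\bigr). \]

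The term $h = 0$ will produce the main term. On $[0,N]$ the phase $\phi(t) = \beta t^k + \eta t$ has monotone derivative $\phi'(t) = k\beta t^{k-1} + \eta$ with $|\phi'(t)| \le \tfrac14 q^{-1} + \tfrac12 q^{-1} \le \tfrac34$, and $\int_0^N |\phi''(t)|\, dt = k|\beta| N^{k-1} \le \tfrac14$; hence the classical comparison of an exponential sum with its integral, valid when the phase derivative is monotone and bounded away from $1$ in modulus (equivalently, Poisson summation with no stationary point; see \cite{Vaughan}), yields $S_0 = v_N(\beta,\eta) + O(1)$. Since $v_N(\beta,\eta) = v_N(\theta - a/q, \xi - b/q)$ and $|G(q;a,b)| \le 1$, the $h = 0$ term equals $G(q;a,b)\, v_N(\theta - a/q, \xi - b/q) + O(1)$.

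For $1 \le |h| \le q/2$ the phase of $S_h$ has monotone derivative $k\beta t^{k-1} + \eta + h/q$, which on $[0,N]$ stays within $\tfrac34 q^{-1}$ of $h/q$; since $|h|/q \in [q^{-1}, \tfrac12]$, this keeps it away from the integers, with distance to the nearest integer $\gg |h|/q$. By the Kuzmin--Landau first-derivative inequality, $|S_h| \ll q/|h|$. On the other hand, since $(a,q) = 1$, the complete Gauss sum satisfies the standard uniform bound $|G(q;a,c)| \ll q^{-1/k+\eps}$ for every $c$ (multiplicativity reduces this to the classical estimates for complete sums over prime-power moduli; see \cite{Vaughan}). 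Combining,
\[ \Bigl| \sum_{0 < |h| \le q/2} G(q;a,b-h)\, S_h \Bigr| \ll q^{-1/k+\eps} \sum_{0 < |h| \le q/2} \frac{q}{|h|} \ll q^{1-1/k+\eps}\log q \ll q^{1-1/k+2\eps}. \]
Adding the $h=0$ contribution and renaming $\eps$ gives the asserted formula.

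I expect the estimate for the terms with $h \ne 0$ to be the crux. Parseval only yields $\sum_h |G(q;a,b-h)|^2 = 1$, which combined with $|S_h| \ll q/|h|$ gives merely $O(q)$ for the total error by Cauchy--Schwarz --- too weak by essentially a factor $q^{1/k}$; extracting the saving requires the \emph{pointwise} Weyl-type bound for the complete Gauss sums, and it is there that the arithmetic input really enters. A minor secondary point is ensuring the sum-versus-integral lemma is applied in the form valid for $|\phi'|$ merely bounded away from $1$ (here by $\tfrac34$) rather than by $\tfrac12$; the implied constant then depends only on that gap, which is harmless here.
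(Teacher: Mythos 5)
Your argument is correct. Bear in mind that the paper itself offers no proof of this lemma: it is imported verbatim, as the surrounding text says, from Theorem 3 of Br\"udern and Robert \cite{BrudernRobert}, so there is no in-paper argument to compare against line by line; what you have written is a sound, self-contained derivation along the standard route. The steps all check out. The expansion $e_q(an^k+bn)=\sum_{h}G(q;a,b-h)e_q(hn)$ over a balanced complete residue system is just discrete Fourier inversion of a $q$-periodic function, and being a finite sum it sidesteps the convergence issues one would face with full Poisson summation over $\mathbb Z$ (where the tail $\sum_{h}q/|h|$ is not absolutely summable without further integration by parts). For $h=0$, the phase derivative $k\beta t^{k-1}+\eta$ is monotone with modulus at most $3/(4q)\le 3/4$, so the classical sum--integral comparison gives $S_0=v_N(\beta,\eta)+O(1)$, and $O(1)$ is harmless against $O(q^{1-1/k+\eps})$. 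For $1\le |h|\le q/2$, the derivative stays within $3/(4q)$ of $h/q$, so its distance to the nearest integer is at least $|h|/(4q)$ on the side of $0$ and at least $\tfrac12-3/(4q)\ge \tfrac18\ge |h|/(4q)$ on the side of $\pm 1$ (for $q\ge 2$; when $q=1$ there are no such $h$), so Kuzmin--Landau indeed yields $|S_h|\ll q/|h|$; combined with the uniform Weyl-type bound $G(q;a,c)\ll q^{-1/k+\eps}$ for $(a,q)=1$ --- which is exactly the first estimate in \eqref{eq:basicbounds} --- the off-center terms contribute $O(q^{1-1/k+\eps}\log q)$, absorbed into $q^{\eps}$. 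Your closing diagnosis is also on target: the pointwise bound on the complete sums is the essential arithmetic input, since Parseval ($\sum_h|G(q;a,b-h)|^2=1$) plus Cauchy--Schwarz would only give $O(q)$.
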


Recall the definition of $S_N(\theta,\xi)$ from Section \ref{section:reduction}. When $a/q + \theta$ lies on a major arc ${\mathfrak M}(a/q)$ and $\xi = b/q + \eta$, with $|\eta| \le 1/(2q)$, the above lemma yields
\begin{equation}\label{eq:majorapprox} 
S_N(a/q+\theta, \xi) = G(q; a,b)v_N(\theta, \eta) + G(q; a,-b)v_N(\theta, -\eta) + O\big( q^{1-1/k+\eps} \big).
\end{equation}
We will use this approximation in conjunction with the following well-known bounds (see Theorems 7.1 and 7.3 in Vaughan \cite{Vaughan}):
\begin{equation}\label{eq:basicbounds}
G(q; a, b) \lesssim q^{-1/k+\eps}, \quad v_N(\theta, \eta) \lesssim N(1 + N|\eta| + N^k|\theta|)^{-1/k}.
\end{equation}
We will make use also of the inequality
\begin{equation}\label{eq:basicbound2}
v_N(\theta, \eta) \lesssim N(1 + N|\eta|)^{-1/2}, 
\end{equation}
which follows from the representation
\[ v_N(\theta, \eta) = \frac 1k \int_0^{N^k} u^{1/k-1} e(\theta u + \eta u^{1/k}) \, du \]
and the second-derivative bound for oscillatory integrals (see p. 334 in \cite{SteinHA}).

\subsection*{Proof of Proposition \ref{lemma:majorarcs}}

The bulk of the work concerns the case $p=2$ of the proposition. 
Since
\[ \widehat{A_\lambda^{a/q}}(\boldsymbol\xi) =
\lambda^{1-d/k} e_q(-\lambda a) \int_{{\mathfrak M}(0/q)} \mathcal F_N(a/q + \theta; \boldsymbol\xi) e(-\lambda \theta) \, d\theta, \]
the asymptotic \eqref{eq:majorapprox} suggests that the following multiplier should provide a good approximation to $\widehat{A_{\lambda}^{a/q}}(\boldsymbol\xi)$:
\[ \widehat{B_{\lambda}^{a/q}}(\boldsymbol\xi) := \lambda^{1-d/k} e_q(-\lambda a) \int_{{\mathfrak M}(0/q)} \mathcal{G}_N(\theta; \boldsymbol\xi)e(-\lambda\theta) \, d\theta, \]
where 
\[ \mathcal{G}_N(\theta; \boldsymbol\xi) := \prod_{j=1}^d \big\{ G(q; a, b_j)v_N(\theta, \eta_j) + G(q; a, -b_j)v_N(\theta, -\eta_j)\big\}, \]
with $b_j$ the unique integer such that $-\frac 12 \le b_j - q\xi_j < \frac 12$ and $\eta_j = \xi_j - b_j/q$. Let $B_\lambda^{a/q}$ denote the operator on $\ell^2(\mathbb Z^d)$ with the above Fourier multiplier. To estimate the $\ell^2$-error of approximation of $A_{\lambda}^{a/q}$ by $B_{\lambda}^{a/q}$, we use that when $\theta \in {\mathfrak M}(0/q)$, \eqref{eq:majorapprox} and \eqref{eq:basicbounds} yield
\[ | \mathcal F_N(a/q+\theta; \boldsymbol\xi) - \mathcal{G}_N(\theta; \boldsymbol\xi) | \lesssim q^{1-d/k+\eps}N^{d-1}(1 + N^k|\theta|)^{(1-d)/k}. \]
Thus, if $d > k+1$, we have
\begin{align*} 
\int_{{\mathfrak M}(0/q)} \sup_{\boldsymbol\xi \in \mathbb T^d} | \mathcal F_N(a/q+\theta; \boldsymbol\xi) - \mathcal{G}_N(\theta; \boldsymbol\xi) | \, d\theta &\lesssim \int_{\mathbb R} \frac {q^{1-d/k+\eps}N^{d-1} \, d\theta}{(1 + N^k|\theta|)^{(d-1)/k}} \\
&\lesssim q^{1-d/k+\eps}N^{d-k-1}.
\end{align*}
Lemma \ref{lemma:L2bound} then yields
\begin{equation}\label{eq:4.3}
\left\| \sup_{\lambda \in [\Lambda/2,\Lambda)} \big| A_\lambda^{a/q} - B_\lambda^{a/q} \big| \right\|_{\ell^2(\mathbb Z^d) \to \ell^2(\mathbb Z^d)} \lesssim q^{1-d/k+\eps}N^{-1}.
\end{equation}

Next, we approximate $B_{\lambda}^{a/q}$ by the operator $C_{\lambda}^{a/q}$ with multiplier
\[ \widehat{C_{\lambda}^{a/q}}(\boldsymbol\xi) := \lambda^{1-d/k} e_q(-\lambda a) \sum_{\mathbf b \in \mathbb Z^d} \psi(q\boldsymbol\xi - \mathbf b)  \int_{{\mathfrak M}(0/q)} \mathcal{G}_N(\theta; \boldsymbol\xi)e(-\lambda\theta) \, d\theta. \]
By the localization of $\psi$, the above sum has at most one term in which $\mathbf b$ matches the integer vector that appears in the definition of $\mathcal G_N(\theta; \boldsymbol\xi)$. Hence, 
\[ \mathcal G_N(\theta; \boldsymbol\xi)\bigg( 1 - \sum_{\mathbf b \in \mathbb Z^d} \psi(q\boldsymbol\xi - \mathbf b) \bigg) \]
is supported on a set where $\frac 18 \le |q\xi_j - b_j| \le \frac 12$ for some $j$. For such $j$, by \eqref{eq:basicbound2},
\[ v_N(\theta, \xi_j - b_j/q) \lesssim (qN)^{1/2}, \]
and we conclude that
\[ |\mathcal G_N(\theta; \boldsymbol\xi)|\bigg( 1 - \sum_{\mathbf b \in \mathbb Z^d} \psi(q\boldsymbol\xi - \mathbf b) \bigg) \lesssim q^{1/2-d/k+\eps}N^{d-1/2}(1 + N^k|\theta|)^{(1-d)/k}. \]
So, when $d > k+1$, Lemma \ref{lemma:L2bound} gives
\begin{align}\label{eq:4.4}
\left\| \sup_{\lambda \in [\Lambda/2,\Lambda)} \big| B_\lambda^{a/q} - C_\lambda^{a/q} \big| \right\|_{\ell^2(\mathbb Z^d) \to \ell^2(\mathbb Z^d)} &\lesssim \int_{\mathbb R} \frac {q^{1/2-d/k+\eps}N^{k-1/2} \, d\theta}{(1 + N^k|\theta|)^{(d-1)/k}} \notag\\
&\lesssim q^{1/2-d/k+\eps}N^{-1/2}.
\end{align}

In our final approximation, we replace $C_{\lambda}^{a/q}$ by the operator $D_{\lambda}^{a/q}$ with multiplier
\[ \widehat{D_{\lambda}^{a/q}}(\boldsymbol\xi) := \lambda^{1-d/k} e_q(-\lambda a) \sum_{\boldsymbol w \in \{ \pm 1 \}^d} \sum_{\mathbf b \in \mathbb Z^d} \psi(q\boldsymbol\xi - \mathbf b) G(q; a, \boldsymbol w\mathbf b) J_{\lambda}(\boldsymbol w(\boldsymbol\xi - q^{-1}\mathbf b)), \]
where $\boldsymbol w\mathbf b = (w_1b_1, \dots, w_db_d)$ and
\[ J_{\lambda}(\boldsymbol\eta) := \int_{\mathbb R} \bigg\{ \prod_{j=1}^{d} v_N(\theta; \eta_j) \bigg\} e(-\lambda\theta) \, d\theta. \]
We remark that $\widehat{C_{\lambda}^{a/q}}(\boldsymbol\xi)$ can be expressed in a matching form, with $J_{\lambda}(\boldsymbol\xi - q^{-1}\boldsymbol w\mathbf b)$ replaced by the analogous integral over ${\mathfrak M}(0/q)$. 
Thus, when $d > k$, we deduce from Lemma \ref{lemma:L2bound} and \eqref{eq:basicbounds} that
\begin{align}\label{eq:4.5}
\left\| \sup_{\lambda \in [\Lambda/2,\Lambda)} \big| C_\lambda^{a/q} - D_\lambda^{a/q} \big| \right\|_{\ell^2(\mathbb Z^d) \to \ell^2(\mathbb Z^d)} &\lesssim \int_{{\mathfrak M}(0/q)^c} \frac {q^{-d/k+\eps}N^{k} \, d\theta}{(1 + N^k|\theta|)^{d/k}} \notag\\
&\lesssim q^{-1+\eps}N^{1-d/k}.
\end{align}
Here, ${\mathfrak M}(0/q)^c$ denotes the complement of the interval ${\mathfrak M}(0/q)$ in $\mathbb R$. 

Finally, we note that $D_\lambda^{a/q}$ is really $M_\lambda^{a/q}$.  Indeed, by the discussion on p. 498 in Stein \cite{SteinHA} (see also Lemma 5 in  Magyar \cite{Magyar:ergodic}), we have 
\begin{align}\label{sphericalmeasure}
J_\lambda(\boldsymbol\eta) &= \int_{\mathbb R} \int_{\mathbb R^d} \mathbf 1_{[0,N]^d}(\mathbf t)e(\boldsymbol\eta \cdot \mathbf t) e(\theta(\mathfrak{f}(\mathbf t) - \lambda)) \, d\mathbf t d\theta \notag\\
&= \lambda^{d/k-1} \int_{\mathbb R^d} \mathbf 1_{[0,N]^d}(\mathbf t)e(\boldsymbol\eta \cdot \mathbf t) \, d\sigma_\lambda(\mathbf t) = \lambda^{d/k-1} \widetilde{d\sigma_\lambda}(\boldsymbol\eta), 
\end{align}
since the surface measure $d\sigma_\lambda$ is supported in the cube $[0,N]^d$. Combining this observation and \eqref{eq:4.3}--\eqref{eq:4.5}, and summing over $a,q$, we conclude that when $d > 2k$, 
\begin{equation}\label{eq:4.8}
\sum_{q \le N} \sum_{a \in \mathbb Z_q^*} \left\| \sup_{\lambda \in [\Lambda/2,\Lambda)} \big| A_\lambda^{a/q} - M_\lambda^{a/q} \big| \right\|_{\ell^2(\mathbb Z^d) \to \ell^2(\mathbb Z^d)} \lesssim N^{-\gamma+\eps}, 
\end{equation} 
with $\gamma = \min \left(d/k-2, \frac 12 \right) > 0$. In particular, when $d \ge \frac 52 k$, we have $\gamma = \frac 12$. 

We can now finish the proof of Proposition \ref{lemma:majorarcs}. For brevity, we write $p_1 = \frac{d}{d-k}$. From \eqref{eq:4.8}, we obtain that
\begin{equation}\label{interpolate_major_arcs1}
\left\| \sup_{\lambda \in [\Lambda/2,\Lambda)} \big| A_\lambda^{{\mathfrak M}} - M_\lambda \big| \right\|_{\ell^2(\mathbb Z^d) \to \ell^2(\mathbb Z^d)} \lesssim N^{-\gamma+\eps}. 
\end{equation}
On the other hand, we know from Propositions \ref{lemma:major_arc_bound} and \ref{lemma:mainterm} that both $ A_{\Lambda}^{{\mathfrak M}}$ and $M_*$ are bounded on $\ell^p(\mathbb Z^d)$ when $p_1 < p \le 2$; hence,
\begin{equation}\label{interpolate_major_arcs2}
\left\| \sup_{\lambda \in [\Lambda/2, \Lambda)} \big| A_\lambda^{{\mathfrak M}}-M_\lambda \big| \right\|_{\ell^p(\mathbb Z^d) \to \ell^p(\mathbb Z^d)} \lesssim 1.
\end{equation}
When $p \in (p_0(d,k), 2]$, we interpolate between \eqref{interpolate_major_arcs1} and \eqref{interpolate_major_arcs2} with $p = p_1 + \eps$ for a sufficiently small $\eps > 0$. The interpolation yields \eqref{eq:majorarcs} with $\beta_p > 0$ that can be chosen arbitrarily close to $2\gamma(p-p_1)/ (kp(2-p_1))$. \qed

\section{Main Term Contribution}
\label{sec:5}

In this section, we prove Proposition \ref{lemma:mainterm}. First, we obtain $L^p(\mathbb R^d)$-bounds for the maximal function of the continuous surface measures $d\sigma_\lambda$.

\begin{lemma}\label{lemma:Archimedean_max_fxn}
If $k \ge 2$, $d>\frac32k$ and $p > \frac{2d-k}{2d - 2k}$, then for all $f \in L^p(\mathbb R^d)$, 
\begin{equation}\label{eq:Rubio} 
\left\| \sup_{\lambda > 0} |f * d\sigma_\lambda| \right\|_{L^p(\mathbb R^d)} \lesssim \|f\|_{L^p(\mathbb R^d)}. 
\end{equation}
\end{lemma}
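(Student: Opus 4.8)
The plan is to reduce \eqref{eq:Rubio} to the classical Euclidean spherical maximal function bound of Stein (and Stein--Wainger for the second-derivative range), using the rescaling structure of the surfaces $\Sigma_\lambda$ together with a decay estimate for $\widetilde{d\sigma_\lambda}$. First I would exploit homogeneity: the form $\mathfrak f$ is homogeneous of degree $k$, so the surface $\Sigma_\lambda = \{\mathfrak f(\mathbf x) = \lambda,\ \mathbf x \in \mathbb R_+^d\}$ is the $\lambda^{1/k}$-dilate of $\Sigma_1$, and the normalization in $d\sigma_\lambda = \lambda^{1-d/k}\,dS_\lambda/|\nabla\mathfrak f|$ has been chosen precisely so that $\widetilde{d\sigma_\lambda}(\boldsymbol\eta) = \widetilde{d\sigma_1}(\lambda^{1/k}\boldsymbol\eta)$ up to a harmless constant (this is the content of the identity \eqref{sphericalmeasure} above, specialized to the Archimedean place). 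Thus the family $\{d\sigma_\lambda\}_{\lambda>0}$ is a genuine one-parameter dilation family generated by the single compactly supported measure $d\sigma_1$, and the maximal operator $\sup_{\lambda>0}|f*d\sigma_\lambda|$ falls under the general framework for maximal averages over dilates of a fixed measure.

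The key analytic input is the Fourier decay of $\widetilde{d\sigma_1}$. The hypersurface $\Sigma_1$ fails to be everywhere curved — it has flat points where some coordinate $x_j$ vanishes and, for $k\ge 3$, where the Gaussian curvature degenerates — so one does not get the optimal $|\boldsymbol\eta|^{-(d-1)/2}$ decay. Instead I would record two bounds: a ``stationary phase away from the bad set'' estimate giving $|\widetilde{d\sigma_1}(\boldsymbol\eta)| \lesssim |\boldsymbol\eta|^{-(d-1)/k}$ from the $d$-fold product structure and the van der Corput bound of order $k$ for each one-dimensional oscillatory integral $\int e(\theta t^k + \eta_j t)\,dt$; and a complementary second-derivative bound $|\widetilde{d\sigma_1}(\boldsymbol\eta)| \lesssim |\boldsymbol\eta|^{-(d-1)/2}$ valid when enough coordinates are bounded away from the flat locus. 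Combining these (splitting $\Sigma_1$ according to how many coordinates $x_j$ are small) yields an effective decay exponent that drives the range $d > \tfrac32 k$, $p > \tfrac{2d-k}{2d-2k}$ in the statement; the exponent $\tfrac{2d-k}{2d-2k}$ is exactly what one gets from the Rubio de Francia / Mockenhaupt--Seeger--Sogge-type criterion, namely $p > \tfrac{2\beta+1}{2\beta}$ with effective decay rate $\beta = \tfrac{d-k}{k}\cdot\tfrac{?}{}$ — I would pin down $\beta$ so that $\tfrac{2\beta+1}{2\beta} = \tfrac{2d-k}{2d-2k}$, i.e. $\beta = \tfrac{d-k}{k}$ in the product-decay regime, matched against the $\tfrac{d-1}{2}$-type decay off the flat set.

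With the decay in hand, I would invoke the standard machinery for maximal functions associated to dilates of a measure $\mu$ with $|\widetilde\mu(\boldsymbol\eta)| \lesssim (1+|\boldsymbol\eta|)^{-\beta}$ and $\widehat\mu$ smooth: decompose $\mu = \sum_{j\ge 0}\mu_j$ by Littlewood--Paley in frequency, so that $\widehat{\mu_j}$ is supported in $|\boldsymbol\eta|\sim 2^j$ with $\|\widehat{\mu_j}\|_\infty \lesssim 2^{-j\beta}$; the $j=0$ piece is dominated by the Hardy--Littlewood maximal function, while for $j\ge 1$ one has the $L^2$ square-function bound $\big\|\sup_{\lambda}|f*\mu_{j,\lambda}|\big\|_{L^2} \lesssim 2^{j(1/2-\beta)}\|f\|_{L^2}$ (via $\sup_\lambda \le \big(\int_0^\infty |\partial_\lambda(\cdot)|^2\,\lambda\,d\lambda\big)^{1/2}\cdot(\cdots)^{1/2}$ and Plancherel) together with a trivial $L^1\to L^{1,\infty}$ or $L^1$-type bound with at most polynomial growth in $2^j$; interpolating and summing over $j$ gives $L^p$-boundedness for $p$ in the asserted range. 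The main obstacle I anticipate is the careful bookkeeping of the Fourier decay of $\widetilde{d\sigma_1}$ near the flat locus of $\Sigma_1$ — partitioning the surface by the sizes of the coordinates, tracking how the effective decay exponent degrades, and verifying that the worst case still produces $\beta$ large enough for the claimed threshold $p > \tfrac{2d-k}{2d-2k}$ when $d > \tfrac32 k$. The rest is a routine application of the well-documented dilation-maximal-function technology and should be quoted rather than reproved.
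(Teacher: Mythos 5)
Your overall architecture matches the paper's: by homogeneity the measures $d\sigma_\lambda$ are dilates of $d\sigma_1$, one majorizes $d\sigma_1$ by a smooth compactly supported density $d\mu$ on the surface $\mathfrak f=1$, and Rubio de Francia's theorem reduces the maximal bound to a pointwise decay estimate $|\widetilde{\mu}(\boldsymbol\xi)| \lesssim (1+|\boldsymbol\xi|)^{-a}$ with $a$ exceeding both $\frac{1}{2p-2}$ and $\frac12$; the value $a = d/k-1$ is exactly the $\beta = (d-k)/k$ you correctly back out of the threshold $\frac{2d-k}{2d-2k}$. The gap is in the derivation of that decay, which is the entire content of the lemma and which you leave with a literal question mark. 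Writing $\widetilde{\mu}(\boldsymbol\xi) = \int_{\mathbb R} \bigl\{ \prod_{j} v_\phi(\theta,\xi_j) \bigr\} e(-\theta)\,d\theta$ with $v_\phi(\theta,\xi) = \int \phi(x) e(\theta x^k + \xi x)\,dx$, the order-$k$ van der Corput bound $|v_\phi(\theta,\xi)| \lesssim (1+|\theta|)^{-1/k}$ decays only in $\theta$, not in $\boldsymbol\xi$: applying it to all $d$ factors and integrating in $\theta$ yields $O(1)$, so ``the $d$-fold product structure and van der Corput'' do not by themselves produce the $|\boldsymbol\xi|^{-(d-1)/k}$ you claim. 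What is needed is to pick a coordinate $i$ with $|\xi_i| \gtrsim |\boldsymbol\xi|$, split the $\theta$-integral at $\theta_0 \sim |\xi_i|$, and use non-stationary phase in the $x$-variable --- $|v_\phi(\theta,\xi_i)| \lesssim_M (1+|\xi_i|)^{-M}$ when $k2^k|\theta| \le |\xi_i|$ --- on the range $|\theta|\le\theta_0$, reserving van der Corput for the other factors and for $|\theta|>\theta_0$; the tail then contributes $\theta_0^{1-d/k} \sim |\boldsymbol\xi|^{1-d/k}$, which is where the exponent $d/k-1$ actually comes from and where the hypotheses $d>k+1$ and $a>\frac12 \Leftrightarrow d>\frac32k$ enter.

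Two further remarks. Your proposed decomposition of $\Sigma_1$ near its flat locus and the auxiliary $(d-1)/2$ decay are unnecessary for the stated range: the uniform $d/k-1$ decay suffices, and tracking curvature would only matter if one wanted a wider range of $p$ than the lemma asserts. Likewise there is no need to reprove the square-function machinery for dilation-invariant maximal operators; the paper simply quotes Theorem A of Rubio de Francia as a black box, which is the cleaner route once the decay estimate is in hand.
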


\begin{proof}
We will deduce the lemma from a result of Rubio de Francia -- Theorem A in \cite{Rubio} -- which reduces \eqref{eq:Rubio} to bounds for the Fourier transform of the measure $d\sigma_\lambda$. First, we majorize the measure $d\sigma_\lambda$ by a smooth one. By the choice of normalization of $d\sigma_\lambda$, we have
\[ f * d\sigma_\lambda(\mathbf x) = \int_{\mathbb R^d} f(\mathbf x - \mathbf y) \, d\sigma_\lambda(\mathbf y) = \int_{\mathbb R^d} f(\mathbf x - t\mathbf y) \, d\sigma_1(\mathbf y), \]
where $t = \lambda^{1/k}$. Let $\phi$ be a smooth function supported in $\left[-\frac 12, \frac 32\right]$ and such that $\mathbf 1_{[0,1]}(x) \le \phi(x)$, and write $\phi(\mathbf x) = \phi(x_1) \cdots \phi(x_d)$. Since $d\sigma_1$ is supported inside the unit cube $[0,1]^d$, we have
\[ \bigg|\int_{\mathbb R^d} f(\mathbf x - t\mathbf y) \, d\sigma_1(\mathbf y) \bigg| \le \int_{\mathbb R^d} |f(\mathbf x - t\mathbf y)| \phi(\mathbf y) \, d\sigma(\mathbf y) =: \int_{\mathbb R^d} |f(\mathbf x - t\mathbf y)| \, d\mu(\mathbf y), \]
where $d\sigma$ is the surface measure on the smooth manifold $x_1^k + \dots + x_d^k = 1$. By Rubio de Francia's theorem, the maximal function
\[ \mathcal A_tf(\mathbf x) := \sup_{t > 0} \int_{\mathbb R^d} |f(\mathbf x - t\mathbf y)| \, d\mu(\mathbf y) \]
is bounded on $L^p(\mathbb R^d)$, provided that
\begin{equation}\label{eq:mu_tilde}
\widetilde{\mu}(\boldsymbol\xi) \lesssim (|\boldsymbol\xi| + 1)^{-a} \quad \text{for some } a > \min\left\{\frac 1{2p-2},\frac12\right\}.
\end{equation} 
Thus, the lemma will follow, if we establish \eqref{eq:mu_tilde} with $a = d/k - 1$ and $d > k+1$ because $a>1/2$ once $d>\frac32k$.

We now turn to \eqref{eq:mu_tilde}. Similarly to \eqref{sphericalmeasure}, we have
\[ \widetilde{\mu}(\boldsymbol\xi) = \int_{\mathbb R} \int_{\mathbb R^d} \phi(\mathbf x)e(\boldsymbol\xi \cdot \mathbf x) e(\theta(\mathfrak f(\mathbf x)-1)) \, d\mathbf x d\theta = \int_{\mathbb R} \bigg\{ \prod_{j=1}^d v_\phi(\theta, \xi_j) \bigg\}e(-\theta) \, d\theta, \]
where
\[ v_\phi(\theta, \xi) := \int_{\mathbb R} \phi(x)e(\theta x^k + \xi x) \, dx. \]
By the corollary on p. 334 of Stein \cite{SteinHA}, we have
\begin{equation}\label{eq:vphi1} 
v_\phi(\theta, \xi) \lesssim (1 + |\theta|)^{-1/k},
\end{equation} 
uniformly in $\xi$. On the other hand, if $k2^k|\theta| \le |\xi|$, we have
\[ \bigg| \frac d{dx} \big( \xi^{-1}\theta x^k + x \big) \bigg| \ge \frac 12, \quad \bigg| \frac {d^j}{dx^j} \big( \xi^{-1}\theta x^k + x \big) \bigg| \lesssim_j 1 \]
on the support of $\phi$. Hence, Proposition VIII.1 on p. 331 of Stein \cite{SteinHA} yields
\begin{equation}\label{eq:vphi2} 
v_\phi(\theta, \xi) \lesssim_M (1 + |\xi|)^{-M}
\end{equation} 
for any fixed $M \ge 1$. 

We now choose an index $j$, $1 \le i \le d$, such that $|\boldsymbol\xi| \le d|\xi_i|$ and set $\theta_0 = |\xi_i|/(k2^k)$. We apply \eqref{eq:vphi1} to the trigonometric integrals $v_\phi(\theta,\xi_j)$, $j \ne i$, and to $v_\phi(\theta,\xi_i)$ when $|\theta| > \theta_0$; we apply \eqref{eq:vphi2} to $v_\phi(\theta,\xi_i)$ when $|\theta| \le \theta_0$. From these bounds and the integral representation for $\widetilde{\mu}(\boldsymbol\xi)$, we obtain
\begin{align*} 
\widetilde{\mu}(\boldsymbol\xi) &\lesssim \int_{|\theta| \le \theta_0} \frac {(1 + |\xi_i|)^{-M}}{(1 + |\theta|)^{(d-1)/k}} \, d\theta + \int_{|\theta| > \theta_0} \frac {d\theta}{(1 + |\theta|)^{d/k}} \\
&\lesssim (1 + |\xi_i|)^{-M} + (1 + |\xi_i|)^{1-d/k} \lesssim (1+|\boldsymbol\xi|)^{1-d/k}, 
\end{align*}
provided that $M \ge d/k - 1$ and $d > k+1$. 
This completes the proof.
\end{proof}

\subsection*{Proof of Proposition~\ref{lemma:mainterm}}

To prove Proposition~\ref{lemma:mainterm}, it suffices to prove that (uniformly in $a$ and $q$)
\[ 
\left\| \sup_{\lambda \in \mathbb N} \big| M_\lambda^{a/q} \big| \right\|_{\ell^p(\mathbb Z^d) \to \ell^p(\mathbb Z^d)} 
\lesssim 
q^{-\frac{d}{k}(2-2/p)+\eps},
\]
for all $\frac{d}{d-k} < p \leq 2$ and $d>k$. The proposition then follows by summing over $a$ and $q$ (the hypothesis on $p$ ensures that the resulting series over $q$ is convergent). 

Fix $q \in \mathbb N$ and $a \in \mathbb Z_q^*$ and write $\psi_1(\boldsymbol\xi) = \psi(\boldsymbol\xi/2)$ (so that $\psi = \psi \psi_1$). We borrow a trick from Magyar, Stein and Wainger \cite{MSW} to express $\widehat{M_\lambda^{a/q}} (\boldsymbol\xi)$ as a linear combination of Fourier multipliers that separate the dependence on $\lambda$ and from the dependence on $a/q$: 
\begin{align*}
\widehat{M_\lambda^{a/q}}(\boldsymbol\xi) &= e_q(-\lambda a) \sum_{\boldsymbol w \in \{ \pm 1 \}^d} \sum_{\mathbf b \in \mathbb Z^d} \psi(q\boldsymbol\xi - \mathbf b) G(q; a, \boldsymbol w\mathbf b) \widetilde{d\sigma_{\lambda}}(\boldsymbol w(\boldsymbol\xi - q^{-1}\mathbf b)) \\ 
&= e_q(-\lambda a) \sum_{\boldsymbol w \in \{ \pm 1 \}^d} \bigg( \sum_{\mathbf b \in \mathbb Z^d} \psi_1(q\boldsymbol\xi - \mathbf b) G(q; a, \boldsymbol w\mathbf b) \bigg) \bigg( \sum_{\mathbf b \in \mathbb Z^d} \psi(q\boldsymbol\xi - \mathbf b) \widetilde{d\sigma_{\lambda}}(\boldsymbol w(\boldsymbol\xi - q^{-1}\mathbf b)) \bigg) \\ 
& =: e_q(-\lambda a) \sum_{\boldsymbol w \in \{ \pm 1 \}^d} \widehat{S_{\boldsymbol w}^{a/q}}(\boldsymbol\xi) \widehat{T_{\lambda,\boldsymbol w}^{q}}(\boldsymbol\xi).
\end{align*}
Since $\boldsymbol w$ takes on precisely $2^d$ values for each $a/q$, it suffices to prove that 
\begin{equation}\label{eq:mainterm_majorarc_estimate:quadrant}
\left\| \sup_{\lambda \in \mathbb N} \big|T_{\lambda,\boldsymbol w}^{q} \circ S_{\boldsymbol w}^{a/q} \big| \right\|_{\ell^p(\mathbb Z^d) \to \ell^p(\mathbb Z^d)} 
\lesssim q^{-\frac{d}{k}(2-2/p)+\eps},
\end{equation}
uniformly for $\boldsymbol w \in \{-1,1\}^d$. To prove \eqref{eq:mainterm_majorarc_estimate:quadrant}, we will first bound the maximal function over the `Archimedean' multipliers $T_{\lambda,\boldsymbol w}^{q}$, and then we will bound the non-Archimedean multipliers $S_{\boldsymbol w}^{a/q}$. 
This is possible because $S_{\boldsymbol w}^{a/q}$ is independent of $\lambda \in \mathbb N$. 

For $d > \frac32k$ and $p>\frac{2d-k}{2d-2k}$, Lemma~\ref{lemma:Archimedean_max_fxn} and Corollary~2.1 of \cite{MSW} (the `Magyar--Stein--Wainger transference principle') give the bound
\begin{equation}\label{eq:transferbound}
\left\| \sup_{\lambda \in \mathbb N} \big| T_{\lambda,\boldsymbol w}^{q}g \big| \right\|_{\ell^p(\mathbb Z^d)} \lesssim \|g\|_{\ell^p(\mathbb Z^d)},
\end{equation}
with an implicit constant independent of $q$ and $\boldsymbol w$. We now observe that $S_{\boldsymbol w}^{a/q}$ does not depend on $\lambda$ and apply \eqref{eq:transferbound} with $g = S_{\boldsymbol w}^{a/q}f$ to find that 
\begin{equation}\label{eq:4.13} 
\left\| \sup_{\lambda \in \mathbb N} \big|T_{\lambda,\boldsymbol w}^{q} \circ S_{\boldsymbol w}^{a/q} \big| \right\|_{\ell^p(\mathbb Z^d) \to \ell^p(\mathbb Z^d)} \lesssim \big\| S_{\boldsymbol w}^{a/q} \big\|_{\ell^p(\mathbb Z^d) \to \ell^p(\mathbb Z^d)},
\end{equation}
under the same assumptions on $d$ and $p$ which we note are weaker than the hypotheses of the proposition. Finally, observe that $G(q; a, b)$ is a $q$-periodic function with $\mathbb Z_q$-Fourier transform equal to 
\[ \sum_{b \in \mathbb Z_q} e_q(-mb) G(q; a, b) = 
q^{-1} \sum_{x \in \mathbb Z_q} e_q( ax^k ) \sum_{b \in \mathbb Z_q} e_q( b(x-m) ) = e_q( {am^k} ), \]
for each $m \in \mathbb Z_q$. Hence, we may apply Proposition~2.2 in \cite{MSW} and the bound \eqref{eq:basicbounds} for $G(q; a, b)$ to deduce that 
\begin{equation}\label{eq:4.14} 
\big\|S_{\boldsymbol w}^{a/q} \big\|_{\ell^p(\mathbb Z^d) \to \ell^p(\mathbb Z^d)} \lesssim q^{-\frac{d}{k}(2-2/p)+\eps}.
\end{equation}
The desired inequality \eqref{eq:mainterm_majorarc_estimate:quadrant} follows immediately from \eqref{eq:4.13} and \eqref{eq:4.14}. \qed

\section*{Acknowledgments}
The first author was supported by NSF grant DMS-1502464. The second author was supported by NSF grant DMS-1147523 and by the Fields Institute. Last but not least, the authors thank Alex Nagel and Trevor Wooley for several helpful discussions.

\bibliographystyle{amsplain}

\begin{dajauthors}
\begin{authorinfo}[TA]
Theresa C. Anderson\\
Department of Mathematics\\ 
University of Wisconsin--Madison\\ 
Madison, WI 53705, U.S.A.\\
tcanderson\imageat{}math\imagedot{}wisc\imagedot{}edu  
\end{authorinfo}
\begin{authorinfo}[BC]
Brian Cook\\
Department of Mathematicl Sciences\\ 
Kent State University\\ 
Kent, OH 44242\\
U.S.A.\\
bcook25\imageat{}kent\imagedot{}edu 
\end{authorinfo}
\begin{authorinfo}[KH]
Kevin Hughes\\
School of Mathematics\\	
The University of Bristol\\	
Bristol, BS8 1TW, UK\\
Kevin.Hughes\imageat{}bristol\imagedot{}ac\imagedot{}uk 
\end{authorinfo}
\begin{authorinfo}[AK]
Angel Kumchev\\
Department of Mathematics\\	
Towson University\\	
Towson, MD 21252, U.S.A.\\
akumchev\imageat{}towson\imagedot{}edu
\end{authorinfo}
\end{dajauthors}

\end{document}